\def\bbbr{{\rm I\!R}}
\theoremstyle{plain}
  \newtheorem{thm}{Theorem}[section]
  \newtheorem{cor}[thm]{Corollary}
  \newtheorem{lem}[thm]{Lemma}
\theoremstyle{definition}
\theoremstyle{remark}
\DeclareMathOperator\bdim{dim_B}
\def\R{\mathbb{R}}
\def\D{\mathcal{D}}
\def\card{\textrm{card}}
\newcommand{\bw}{\mathbf{w}}
\newcommand{\bp}{\mathbf{p}}
\newcommand{\dimh}{\dim_{\rm H}}
\newcommand\asd{\mbox{\rm dim}_{\rm A}\,} 
\newcommand\lwd{\mbox{\rm dim}_{\rm L}\,} 
\newcommand\pkd{\mbox{\rm dim}_{\rm P}\,} 
\newcommand\hdd{\mbox{\rm dim}_{\rm H}\,} 
\newcommand\ubd{\overline{\mbox{\rm dim}}_{\rm B}\,} 
\newcommand\lbd{\underline{\mbox{\rm dim}}_{\rm B}\,} 
\newcommand\uld{\overline{\mbox{\rm dim}}_{\rm loc}\,} 
\newcommand\lld{\underline{\mbox{\rm dim}}_{\rm loc}\,} 
\newcommand\lcd{\mbox{\rm dim}_{\rm loc}\,} 
\begin{document}
\title[Dimensions  of a class of  self-affine Moran sets and measures in $\R^2$ ]{Dimensions  of a class of  self-affine Moran sets and measures in $\R^2$ }
\author{Yifei Gu}
\address{Department of Mathematics, East China Normal University, No. 500, Dongchuan Road, Shanghai 200241, P. R. China}

\email{52275500012@stu.ecnu.edu.cn}

\author{Chuanyan Hou}
\address{College of Mathematics Sciences, Xinjiang Normal University, Urumqi, Xinjiang, 830054, P. R. China}
\email{hchy\_e@163.com}

\author{Jun Jie Miao}
\address{Department of Mathematics, East China Normal University, No. 500, Dongchuan Road, Shanghai 200241, P. R. China}

\email{jjmiao@math.ecnu.edu.cn}


\begin{abstract}
For each integer $k>0$, let $n_k$ and $m_k$ be integers such that $n_k\geq 2, m_k\geq 2$, and let $\mathcal{D}_k$ be a subset of $\{0,\dots,n_k-1\}\times
\{0,\dots,m_k-1\}$. For each $w=(i,j)\in \mathcal{D}_k$, we define an affine transformation on~$\R^2$ by
$$
  \Phi_w(x)=T_k(x+w), \qquad w\in\mathcal{D}_k,
$$
where $T_k=\operatorname{diag}(n_k^{-1},m_k^{-1})$.
The non-empty compact set
$$
E=\bigcap\nolimits_{k=1}^{\infty}\bigcup\nolimits_{(w_1w_2\ldots w_k)\in \prod_{i=1}^k\mathcal{D}_i} \Phi_{w_1}\circ \Phi_{w_2}\circ \ldots\circ \Phi_{w_k}
$$
is called a \textit{self-affine Moran set}.

In the paper, we provide the lower, packing, box-counting and Assouad  dimensions of the self-affine Moran set $E$. We also explore the dimension properties of self-affine Moran measure $\mu$ supported on $E$,  and we provide  Hausdorff, packing and entropy dimension formulas of $\mu$.
\end{abstract}

\maketitle

\section{Introduction}
\subsection{Dimensions of measures}
In the dimension theory of fractal geometry and dynamical systems, the dimensions of invariant measures are important objects to investigate, and the most frequently used dimensions are Hausdorff dimension  and entropy dimension.

Let $\mu$ be a finite Borel measure in $\R^d$. The \textit{Hausdorff and packing dimensions} of $\mu$, respectively, are defined as
$$
\hdd \mu=\inf\{\hdd A :  \mu(A^c)=0\}, \qquad \pkd \mu =\inf\{\pkd A :  \mu(A^c)=0\}.
$$
The \textit{lower and upper local dimensions} of $\mu$ are given by
$$
\lld \mu(x) = \liminf_{r\to 0} \frac{\log \mu(B(x,r))}{\log r},  \qquad
\uld  \mu(x) =\limsup_{r\to 0} \frac{\log \mu(B(x,r))}{\log r},
$$
and we say \textit{local dimension} exists at $x$ if these are equal, writing $\lcd \mu(x)$ for the common value.
Let $\mathcal{M}_n$ be the partition of $\R^d$ into grid boxes $\Pi_{i=1}^d [2^{-n}j_i, 2^{-n}(j_i+1)]$ with integers $j_i$.  The \textit{lower and upper entropy dimensions} of $\mu$, respectively, are defined as
$$
\underline{\dim}_e \ \mu = \liminf_{n \to \infty} \frac{H_n(\mu)}{ \log 2^n},  \qquad
\overline{\dim}_e \ \mu =\limsup_{n \to \infty} \frac{H_n(\mu)}{ \log 2^n},
$$
where
$$
H_n(\mu)=-\sum_{Q\in \mathcal{M}_n} \mu(Q)\log\mu(Q).
$$
If these are equal, we refer to the common value
as the \textit{entropy dimension} of $\mu$. We refer the reader to~\cite{Bk_KJF2,FanLR02} for the background reading.

  A well known theorem of Young~\cite{Young82} States that
\begin{thm}
Let $\mu$ be a probability measure on $\R^d$. Suppose that the local dimension
$$
\lcd \mu(x)= \alpha. \qquad \mu\textit{-a.e. } x\in\R^d.
$$
Then $\dim_{e}\mu=\hdd \mu=\alpha$.
\end{thm}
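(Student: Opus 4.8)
The plan is to prove the two equalities separately, first $\hdd\mu=\alpha$ and then $\dim_e\mu=\alpha$, exploiting that the hypothesis $\lcd\mu(x)=\alpha$ forces $\lld\mu(x)=\uld\mu(x)=\alpha$ for $\mu$-a.e.\ $x$. For the Hausdorff dimension I would use the two standard Billingsley-type density comparison lemmas (see, e.g., \cite{FanLR02}). For the upper bound, from $\uld\mu(x)\le\alpha$ a.e.\ one decomposes the full-measure set $\{\uld\mu\le\alpha\}$ into the countably many pieces $A_k=\{x:\mu(B(x,r))\ge r^{\alpha+\epsilon}\text{ for all }0<r<1/k\}$; on each piece the lower bound on $\mu(B(x,r))$ forces $\mathcal{H}^{\alpha+\epsilon}(A_k)<\infty$, whence $\hdd\mu\le\alpha+\epsilon$, and $\epsilon\to0$. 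For the lower bound, from $\lld\mu(x)\ge\alpha$ a.e.\ and the companion decomposition $B_k=\{x:\mu(B(x,r))\le r^{\alpha-\epsilon}\text{ for all }0<r<1/k\}$, the mass distribution principle gives $\mathcal{H}^{\alpha-\epsilon}(B_k)\ge\mu(B_k)$; since some $B_k$ has positive measure and any full-measure set must meet it in positive measure, $\hdd\mu\ge\alpha-\epsilon$, and again $\epsilon\to0$.

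For the entropy dimension, write $H_n(\mu)=\int_{\R^d} g_n\,d\mu$ with $g_n(x)=-\log\mu(Q_n(x))/\log2^{n}$, where $Q_n(x)\in\mathcal{M}_n$ is the grid box containing $x$; since $\mu$ is a probability measure, $g_n\ge0$. For the lower bound $\underline{\dim}_e\,\mu\ge\alpha$ I would use the trivial inclusion $Q_n(x)\subseteq B(x,\sqrt{d}\,2^{-n})$ to get $g_n(x)\ge-\log\mu(B(x,\sqrt{d}\,2^{-n}))/\log2^{n}$, whose $\liminf$ is $\lld\mu(x)=\alpha$ a.e.; Fatou's lemma then yields $\liminf_n\int g_n\,d\mu\ge\int\liminf_n g_n\,d\mu\ge\alpha$.

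For the upper bound $\overline{\dim}_e\,\mu\le\alpha$ I would first establish that $\limsup_n g_n(x)\le\alpha$ for $\mu$-a.e.\ $x$, apply Egorov's theorem to obtain a set $A_\delta$ with $\mu(A_\delta)>1-\delta$ on which $g_n\le\alpha+\epsilon$ for all large $n$, and split $\int g_n\,d\mu=\int_{A_\delta}g_n\,d\mu+\int_{A_\delta^c}g_n\,d\mu$. The first integral is at most $\alpha+\epsilon$; for the second, comparing $\mu$ with the restriction $\nu=\mu|_{A_\delta^c}$ gives $\sum_Q-\nu(Q)\log\mu(Q)\le H_n(\nu)\le m\,(nd\log2+\log(C/m))$, where $m=\nu(\R^d)\le\delta$ and $C2^{nd}$ bounds the number of grid boxes meeting $\operatorname{supp}\mu$, so $\int_{A_\delta^c}g_n\,d\mu$ has $\limsup$ at most $\delta d$. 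Letting $\epsilon,\delta\to0$ then gives $\overline{\dim}_e\,\mu\le\alpha$, and combined with the lower bound this yields $\dim_e\mu=\alpha$.

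The main obstacle is the pointwise bound $\limsup_n g_n(x)\le\alpha$ a.e., i.e.\ passing from the Euclidean upper local dimension to its grid analogue: the inclusion needed to bound $\mu(Q_n(x))$ from below fails when $x$ lies close to the boundary of its grid box, so $\uld\mu(x)=\alpha$ does not immediately control $\limsup_n g_n(x)$. I would resolve this by observing that $\dim_e\mu$ is unaffected by translating the grid, since two translated level-$n$ partitions refine one another up to a factor $2^d$ and hence have entropies differing by at most the constant $d\log2$, which is negligible after dividing by $\log2^n$. A Fubini/Borel--Cantelli argument then selects a translate for which $\mu$-a.e.\ $x$ eventually avoids the boundary layers, so that $B(x,\eta2^{-n})\subseteq Q_n(x)$ for all large $n$ and the grid upper local dimension coincides with $\uld\mu(x)=\alpha$. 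Running the Egorov estimate for this translated grid and invoking translation invariance of $\dim_e\mu$ completes the argument.
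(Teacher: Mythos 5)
First, a point of comparison: the paper does not prove this statement at all --- it is quoted as Young's theorem with a citation to \cite{Young82} --- so your argument must be judged on its own merits rather than against an internal proof. Most of it is sound. The Hausdorff-dimension part is the standard Billingsley/Frostman argument (essentially Proposition 2.3 of \cite{Falco97}, which the paper records later as Lemma 4.2), and both decompositions are used correctly. The Fatou argument for $\underline{\dim}_e\,\mu\ge\alpha$ is fine, as is the Egorov-plus-restricted-entropy bookkeeping for the upper bound --- granted the pointwise grid estimate, and granted bounded support: your count of $C2^{nd}$ boxes meeting $\operatorname{supp}\mu$ requires compact support, and some such hypothesis is genuinely needed, since a measure with unbounded support and infinite grid entropy violates the theorem as literally stated (in the paper all measures live on $[0,1]^2$, so this is harmless but should be said).

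The genuine gap is the final step, the pointwise bound $\limsup_n g_n(x)\le\alpha$ a.e.\ via a random translate. With a boundary layer of \emph{fixed} relative width $\eta$, the events $E_n=\{(x,t):\operatorname{dist}(x,\partial Q_n^t(x))<\eta 2^{-n}\}$ satisfy $(\mu\times\lambda)(E_n)=1-(1-2\eta)^d$ for \emph{every} $n$, a constant independent of $n$; hence $\sum_n(\mu\times\lambda)(E_n)=\infty$ and Borel--Cantelli gives nothing. Worse, the intermediate claim is actually false within the scope of the theorem: take $\mu$ to be Lebesgue measure on $[0,1]^d$ (so $\alpha=d$). For every translate $t$, the binary digits of $x-t$ are i.i.d.\ fair bits for $\mu$-a.e.\ $x$, and the second Borel--Cantelli lemma (applied to disjoint digit blocks of length $\log_2(1/\eta)$) shows that a.e.\ $x$ lies within $\eta2^{-n}$ of the boundary of its cell for infinitely many $n$. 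So no translate makes a.e.\ point eventually avoid fixed-proportion boundary layers (for Lebesgue the entropy conclusion of course still holds, since $\mu(Q_n(x))=2^{-nd}$ exactly; it is your geometric inclusion that fails). The standard repair is to let the layers shrink: use widths $\eta_n2^{-n}$ with $\sum_n\eta_n<\infty$, e.g.\ $\eta_n=n^{-2}$. Then $\sum_n(\mu\times\lambda)(E_n)\le 2d\sum_n\eta_n<\infty$, and Borel--Cantelli plus Fubini produce a translate $t$ such that, for $\mu$-a.e.\ $x$, $B(x,\eta_n2^{-n})\subseteq Q_n^t(x)$ for all large $n$. Since $\log(1/\eta_n)=2\log n=o(n)$, the radius distortion is negligible and one still gets
$$
\limsup_{n\to\infty}\frac{-\log\mu(Q_n^t(x))}{n\log 2}\le\limsup_{n\to\infty}\frac{-\log\mu\big(B(x,\eta_n2^{-n})\big)}{-\log(\eta_n2^{-n})}\le\uld\mu(x)=\alpha
$$
$\mu$-a.e. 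With this substitution, and with your (correct) observation that translating the grid changes $H_n$ by at most the additive constant $d\log 2$, the rest of your proof goes through.
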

Determination of the dimensions of fractal sets is a challenging problem, see~\cite{BHR19, DasSim17, Falco88,FDJWY05, FMS18, Solom98,PerSo00} for various studies on the dimension theory of fractal sets. In particular, for non-typical self-affine sets, one strategy is to compute the Hausdorff dimensions of measures supported on a fractal set via local dimensions, and the supreme dimension of measures often gives the Hausdorff dimension of the fractal set, see~\cite{BHR19, Baran07,Bedfo84, DasSim17, KenPer96, McMul84,LalGa92},
Hence, in these studies, people rely on the existence of local dimensions to compute the dimensions of measures, that is to say, in these studies the Hausdorff and entropy dimension of measure are identical. It is an interesting question to investigate the dimension theory of the sets and measures where local dimensions do not exist.

\subsection{Self-affine sets}  First, we review a class of non-typical self-affine sets.

Given  integers $m$ and $n$ such that $n\geq m\geq 2$.
Let $\mathcal{D}$ be a subset of $\{0,\dots,n-1\}\times
\{0,\dots,m-1\}$.
For each $w\in \mathcal{D}$, we define an
affine transformation $\Phi_{w}$ on~$\R^2$ by
\begin{equation}\label{eq:Sk}
  \Phi_w(x)=T(x+w),
\end{equation}
where $T=\operatorname{diag}(n^{-1},m^{-1})$. Then
$\{\Phi_w\}_{w\in\mathcal{D}}$ forms a self-affine \emph{iterated function system} (IFS).  By the well-known theorem of Hutchinson, see \cite{Bk_KJF2,Hutch81}, this self-affine IFS has a unique self-affine attractor,  that is a unique non-empty compact set  $E \subset
\bbbr^{N}$ such that
$$
E = \bigcup_{i=1}^{m} \Phi_{i}(E).
$$
The self-affine set $E$ is also called
a \emph{Bedford-McMullen set} or  a \emph{Bedford-McMullen carpet}  \cite{Bedfo84,McMul84}. Since Bedford-McMullen carpets are a class of simplest self-affine sets, They are frequently used as a testing ground on questions and conjectures of fractals.

All kinds of dimensions of Bedford-McMullen carpets have been investigated, see~\cite{Bedfo84, Fraser14, Fraser21, Mackay11, McMul84}, and these sets are often used as good examples for the following dimension inequalities
\begin{equation}\label{dim_ineq}
\lwd E\leq  \hdd E\leq \bdim E\leq \asd E,
\end{equation}
where $\lwd$ and $\asd$ denote lower dimension and Assouad dimension, respectively, see section~\ref{sec_Ld} for the definitions. We refer readers to~\cite{Fraser20} for details of Assouad type dimensions.

There are various  generalisations for Bedford-McMullen carpets from different aspects, see~\cite{Baran07, Fraser14, Fraser21, FDJWY05, KenPer96, LalGa92}. In~\cite{KenPer96}, Kenyon and Peres studied the self-affine sponge $E$, which is a generalization of Bedford-McMullen carpet in $\R^d$, and they find the Hausdorff dimension of self-affine measures by using ergodic property to show that the local dimension exists. Moreover, they proved that there exists a unique ergodic self-affine measure of full Hausdorff dimension, i.e.
$$ \hdd \mu=\max\{\hdd \nu : \textit{ for all self-affine measure $\nu$ supported on $E$}  \}=\hdd E.$$

In this paper, we study a class of new fractals, named self-affine Moran sets (see subsection~\ref{sec_SAMS}), which may also be regarded as a generalisation of Bedford-McMullen carpet. Since we apply different affine IFSs at the different levels in the iterating process,  such sets do not have dynamical properties any more. Therefore the tools of ergodic theory cannot be invoked, which causes that  the local dimension of measures supported on these sets does not exist, and this leads to the difficulties to determine their dimensions of the sets and measures.

\subsection{Self-affine Moran sets }\label{sec_SAMS}
Given  a sequence $\{(n_k,m_k)\}_{k=1}^\infty$, where $m_k$ and $n_k$ are integers such that $n_k\geq 2$ and  $m_k\geq 2$. For each integer $k>0$, let $\mathcal{D}_k$ be a subset of $\{0,\dots,n_k-1\}\times\{0,\dots,m_k-1\}$. We write $r_k=\card(\D_k)$ and always assume that $r_k\geq 2$. 
The set of all  finite sequences with length $k$ and the set of infinite sequences are denoted by
$$
  \Sigma^{k} = \prod_{j=1}^k\mathcal{D}_j  ,\qquad
  \Sigma^{\infty} = \prod_{j=1}^\infty\mathcal{D}_j.
$$
For
$\mathbf{w}=w_1\cdots w_k\in\Sigma^k$,
$\tau=\tau_1\cdots\tau_l\in\Sigma^l$, write $\mathbf{w}\ast\tau=
w_1\cdots w_k\tau_1\cdots\tau_l\in\Sigma^{k+l}$. We write
$\mathbf{w}|k = (w_1\cdots w_k)$ for the {\it curtailment}
after $k$ terms of $\mathbf{w} = (w_1 w_2\cdots)\in
\Sigma^{\infty}$. We write $\mathbf{w} \preceq \tau$ if $\mathbf{w}$ is a
curtailment of $\tau$. We call the set $[\mathbf{w}] =
\{\tau\in\Sigma^{\infty} : \mathbf{w} \preceq\tau\}$ the {\it cylinder}
of $\mathbf{w}$, where $\mathbf{w}\in \Sigma^*$. If $\mathbf{w}=\emptyset$, its cylinder is $[\mathbf{w}]=\Sigma^{\infty}$.

Given an integer $k>0$. For each $w=(i,j)\in \mathcal{D}_k$, we define an affine transformation on~$\R^2$ by
\begin{equation}\label{eq:Sk}
  \Phi_w(x)=T_k(x+w), \qquad w\in\mathcal{D}_k,
\end{equation}
where $T_k=\operatorname{diag}(n_k^{-1},m_k^{-1})$.
For each $\mathbf{w}=(w_1w_2\ldots w_k)\in \Sigma^k$, we write $$
\Phi_{\mathbf{w}}=\Phi_{w_1}\circ \Phi_{w_2}\circ \ldots\circ \Phi_{w_k}.
$$

Suppose that $J=[0,1]^2\subset \mathbb{R}^{2}$. For $k=1,2,\ldots$, let $\{\Phi_w\}_{w\in\mathcal{D}_k }$ be the self-affine IFS as in~\eqref{eq:Sk}.
For each $\mathbf{w}\in \Sigma^k$, the set $J_{\mathbf{w}}$ is a geometrical affine copy to
$J$, i.e., there exists an affine mapping $\Phi_{\mathbf{w}}:\mathbb{R}%
^{2}\rightarrow \mathbb{R}^{2}$ such that $J_{\mathbf{w}}=\Phi_{\mathbf{w}}(J)$.
The non-empty compact set
\begin{equation}\label{attractor}
E=\bigcap\nolimits_{k=1}^{\infty}\bigcup\nolimits_{\mathbf{w}\in \Sigma^{k}} J_{\mathbf{w}}
\end{equation}
is called a \textit{self-affine Moran set or self-affine set with Moran construction} $\{(n_k,m_k,\mathcal{D}_k)\}_{k=1}^\infty$. For all $\mathbf{w}\in \Sigma^{k}$, the elements $J_{\mathbf{w}}$
are called \textit{\ $k$th-level basic sets} of $E$, see Figure~\ref{figsm} for the first three levels.

Note that this  may also be regarded as a generalization of Moran fractals where only similarity contractions are used in the construction, see~\cite{Moran,Wen01}.  In~\cite{GM22}, the authors studies a special case of these sets where they require that $n_k\geq m_k$ for all $k>0$, and they provided  the Assouad, packing and box-counting dimensions of the sets. They also obtained the Hausdorff dimension formula under some strong technique assumptions.  In this paper, we are interested in investigating the dimension properties of measures supported on self-affine Moran  sets, and we also provide the Assouad, packing and box-counting dimension formulas of sets which extend the conclusions in~\cite{GM22}. Furthermore, we study the lower dimension of the self-affine sets which has not been studied,  and this conclusion completes the dimension formulas in inequality~\eqref{dim_ineq}.

\begin{figure}[h]
\centering
\includegraphics[width=0.8\textwidth]{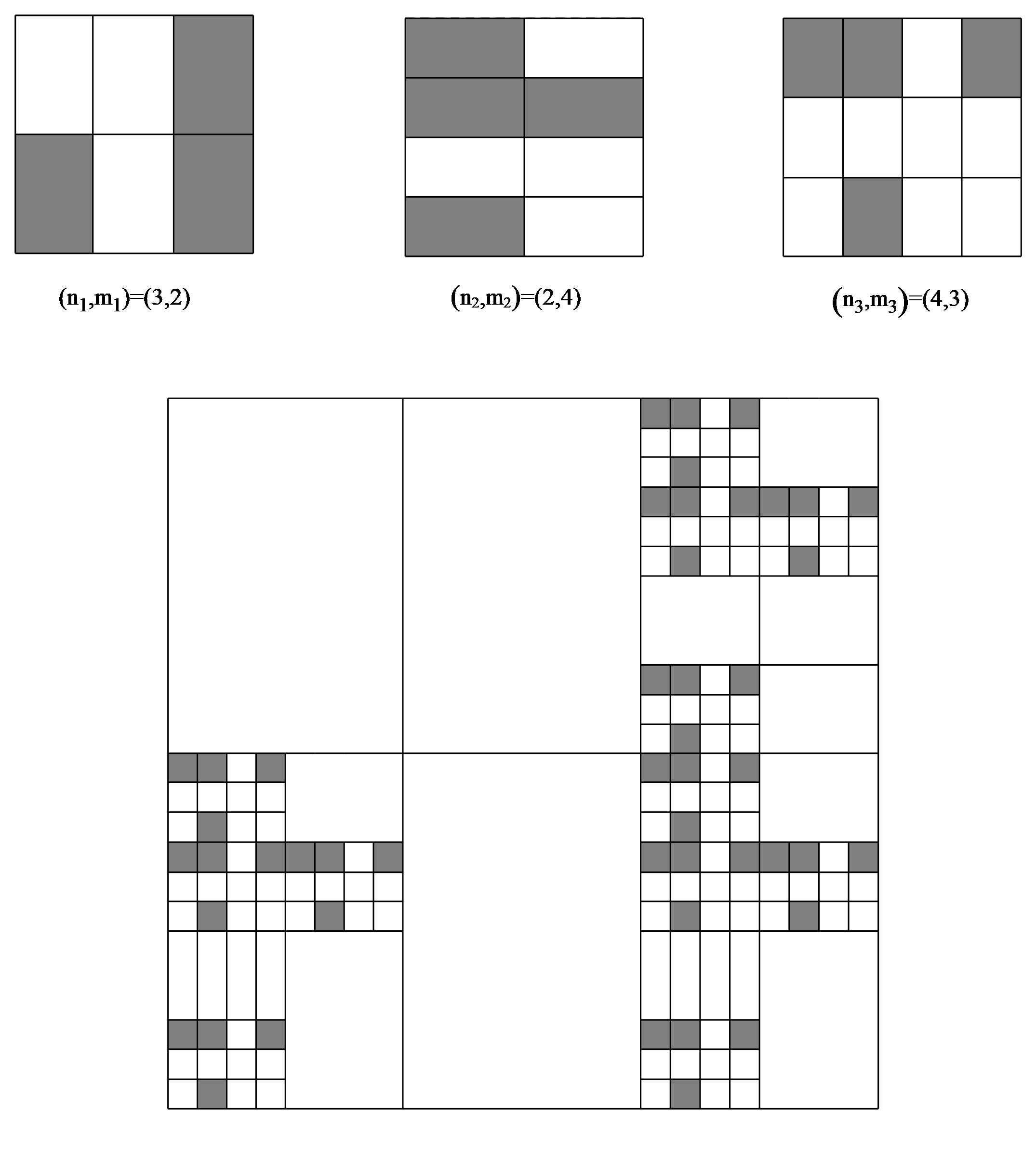}
\caption{Self-affine Moran constructed to Level 3, where $\D_1=\{(0,0),(2,0),(2,1)\}$, $\D_2=\{(0,0),(0,2),(0,3),(1,2)\}$ and $
\D_3=\{(0,2),(1,0),(1,2),(3,2)\}.$}\label{figsm}
\end{figure}

Let $\Pi: \Sigma^\infty \rightarrow \mathbb{R}^2$ be the projection given by
$$
\Pi(\mathbf{w})= \sum_{k=1}^{\infty} \mathrm{diag} \left(\prod_{h=1}^{k} n_h^{-1}, \prod_{h=1}^{k} m_h^{-1}\right)w_k.
$$
Then the self-affine Moran set $E$ is the image of $\Pi$, i.e. $E=\Pi(\Sigma^\infty)$.  Note that the projection $\Pi$ is surjective.

Let $\mathcal{P}_k$ denote the collection of  all probability vectors on $\D_k$, and $\mathcal{P}=\prod_{k=1}^\infty \mathcal{P}_k$.
Given $\bp=(\bp_k)_{k=1}^\infty\in\mathcal{P}$, where $\bp_k=\big(p_k(ij)\big)_{(i,j)\in \mathcal{D}_k}\in\mathcal{P}_k$ is a probability vector. For each $\mathbf{w}=w_{1} w_2 \cdots w_{k}\in \Sigma^k$, we write
\begin{equation}\label{nu}
   \nu_\bp([\mathbf{w}])=p_{\mathbf{w}}=p_1(w_{1}) p_2(w_2) \cdots p_k(w_{k}).
\end{equation}
Then $\nu_\bp$ is a Borel measure on $\Sigma^\infty$. It is clear that
\begin{equation}\label{projmu}
  \mu_\bp (A)=\nu_\bp(\Pi^{-1}A)
\end{equation}
is a Borel probability measure on $E$, and we call it a \textit{self-affine Moran measure} on $E$.

 For each $k>0$, we write that, for $w=(i,j)\in \mathcal{D}_k$,
$$
q_k(w)=q_k(j)=\sum_{(i,j)\in \mathcal{D}_k} p_k(i,j),\qquad \widehat{q}_k(w)=\widehat{q}_k(i)=\sum_{(i,j)\in \mathcal{D}_k} p_k(i,j).
$$

For each $\delta>0$, let $k=k(\delta)$ be the unique integer satisfying
\begin{equation}\label{def_k}
 \qquad\frac{1}{m_1}\frac{1}{m_2}\ldots \frac{1}{m_k}\leq \delta<\frac{1}{m_1}\frac{1}{m_2}\ldots \frac{1}{m_{k-1}}.
\end{equation}
Note that if there is no integer satisfying above equation, we always set $k=1$. For each given integer $k$, let $l=l(k)$ be the unique integer satisfying
\begin{equation} \label{def_l}
\frac{1}{n_1}\frac{1}{n_2}\ldots \frac{1}{n_l}\leq \frac{1}{m_1}\frac{1}{m_2}\ldots \frac{1}{m_{k}}<\frac{1}{n_1}\frac{1}{n_2}\ldots \frac{1}{n_{l-1}}.
\end{equation}
We sometimes write $l(\delta)$ for $l(k)$ if $k=k(\delta)$ is given by~\eqref{def_k}. If there is no ambiguity in the context, we just write $l$ instead of $l(k)$ for simplicity.

For each $\delta>0$ and every $\mathbf{w}=w_1w_2\ldots w_n\ldots \in \Sigma^\infty$, where $w_n=(i_n,j_n)$,  we write
$$
U(\delta, \mathbf{w})=\Big\{\mathbf{v}=v_1v_2\ldots v_n \ldots\in\Sigma^\infty:
\begin{array}{ll}
    i_n=i_n',&n=1,\ldots,l(\delta), \\
    j_n=j_n',&n=1,\ldots, k(\delta),
\end{array}
v_n=(i_n',j_n')\Big\},
$$
and we write $\mathcal{U}_\delta$ for the collection of all such sets, i.e.
$$
\mathcal{U}_\delta=\{U(\delta, \mathbf{w}): \bw\in \Sigma^\infty\}.
$$
We write
\begin{equation}\label{appsquare}
\mathcal{S}_\delta=\{\Pi(U): U\in \mathcal{U}_\delta\}.
\end{equation}
The elements $S$ of $\mathcal{S}_\delta$ are called the \textit{$\delta$-approximate squares}. The measure distributed on approximate squares is essential in finding the dimensions of sets and measure.

Let $\nu_\bp$ and $\mu_\bp$ be the  measures given by \eqref{nu} and \eqref{projmu}. Given $\delta>0$, for each $U(\delta, \mathbf{w})\in \mathcal{U}_\delta$, we have that
\begin{equation}\label{nuas}
  \nu_\bp(U(\delta, \mathbf{w}))=\left\{
  \begin{array}{lcl}
    p_1(w_1)\ldots p_l(w_l) q_{l+1}(w_{l+1})\ldots q_k(w_k),  & \  & l \leq k, \\
    p_1(w_1)\ldots p_k(w_k)\widehat{q}_{k+1}(w_{k+1}) \cdots \widehat{q}_l(w_l), & \  & l > k.
  \end{array}
  \right.
\end{equation}
where  $k=k(\delta)$ and $l=l(\delta)$ are given by \eqref{def_k} and \eqref{def_l} .
For each $S(\delta, x)\in \mathcal{S}_\delta$ where $x\in S(\delta,x)\cap E$, there exists a sequence $\mathbf{w}$ such that $\Pi(\mathbf{w})=x$ and $\Pi(U(\delta, x))= S(\delta, x)$. Then
\begin{equation}\label{muas}
  \mu_\bp(S(\delta, x))=  \nu_\bp(U(\delta, \mathbf{w})).
\end{equation}

  Approximate squares are an essential tool in studying self-affine fractals, see~\cite{Baran07, Bedfo84, Fraser21, LalGa92,McMul84}, and  we may also apply  this tool to study the dimensions of the self-affine Moran sets and self-affine Moran measures.

\section{ Main Results}\label{sec_SCMR}

In this section, we state our main conclusions. Let
\begin{equation}\label{UB}
  N^+=\sup\{n_k, m_k: k=1,2,\ldots\}.
\end{equation}
We always assume that $N^+$ is finite in the paper. Given $\bp \in \mathcal{P}$. For each integer $k>0$, the \textit{ $k$th entropy} is defined as
\begin{equation}\label{def_Hp}
  H_k(\bp)= \left\{
  \begin{array}{lcc}
  \sum_{i=1}^{l} \sum_{w\in \D_i} p_i(w)\log p_i(w) + \sum_{i=l+1}^{k} \sum_{w\in \D_i} p_i(w)\log q_i(w) & & l\leq k;\\
  \sum_{i=1}^{k} \sum_{w\in \D_i} p_i(w)\log p_i(w) + \sum_{i=k+1}^{l} \sum_{w\in \D_i} p_i(w)\log \widehat{q}_i(w) & & l> k,
  \end{array}\right.
\end{equation}
where $l=l(k)$ is given by~\eqref{def_l}.

Since the local dimension of self-affine Moran measures does not exist,  it leads to the non-existence of entropy dimension. First we  gives formulas of the upper and lower entropy dimensions by using $k$th entropy.
\begin{thm}\label{dime}
Let $E$ be the self-affine Moran set defined by ~\eqref{attractor} with  $N^+<\infty$. Given $\bp \in \mathcal{P}$, let $\mu_\bp$ be the self-affine Moran Measure defined by~\eqref{projmu}. Then
\begin{eqnarray*}
\overline{\dim}_e \mu_\bp &=& \limsup_{k \to \infty} \frac{H_k(\bp)}{\sum_{i=1}^{k} \log m_i};\\
\underline{\dim}_e \mu_\bp &=& \liminf_{k \to \infty} \frac{H_k(\bp)}{\sum_{i=1}^{k} \log m_i}.
\end{eqnarray*}
\end{thm}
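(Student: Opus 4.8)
The plan is to replace the dyadic grids $\mathcal{M}_n$ in the definition of entropy dimension by the geometrically natural partitions into $\delta$-approximate squares $\mathcal{S}_\delta$, to compute the entropy of $\mu_\bp$ with respect to the latter explicitly, and then to reconcile the two normalizing scales. Concretely I would proceed in three steps: (i) show that the dyadic-grid entropy $H_n(\mu_\bp)$ and the entropy of $\mu_\bp$ on $\mathcal{S}_{2^{-n}}$ differ by a bounded amount; (ii) evaluate the approximate-square entropy exactly, obtaining the $k$th entropy $H_k(\bp)$ with $k=k(2^{-n})$; and (iii) pass from the denominator $\log 2^n$ to $\sum_{i=1}^k\log m_i$.

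For step (i), the key geometric fact is that, by the choice of $l=l(k)$ in \eqref{def_l} and the assumption $N^+<\infty$, each element of $\mathcal{S}_\delta$ is a rectangle with horizontal side $\prod_{i\le l}n_i^{-1}$ and vertical side $\prod_{i\le k}m_i^{-1}$, and \eqref{def_l} forces these two sides to differ by a factor at most $N^+$; hence every atom of $\mathcal{S}_{2^{-n}}$ is comparable to a square of side $\asymp 2^{-n}$ (this holds uniformly in the two cases $l\le k$ and $l>k$). Consequently each dyadic box of $\mathcal{M}_n$ meets at most $C=C(N^+)$ atoms of $\mathcal{S}_{2^{-n}}$, and conversely. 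Applying the standard estimate $H(\mathcal{A})\le H(\mathcal{B})+H(\mathcal{A}\mid\mathcal{B})$ together with the bound $H(\mathcal{A}\mid\mathcal{B})\le\log C$ on the conditional entropy, in both directions, gives $|H_n(\mu_\bp)-H(\mu_\bp,\mathcal{S}_{2^{-n}})|\le\log C$ uniformly in $n$. Dividing by $\log 2^n\to\infty$, the sequences $H_n(\mu_\bp)/\log 2^n$ and $H(\mu_\bp,\mathcal{S}_{2^{-n}})/\log 2^n$ share the same upper and lower limits.

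For step (ii) I use $\mu_\bp(S)=\nu_\bp(U(\delta,\bw))$ from \eqref{muas} and the level-wise product form \eqref{nuas}. Because $\nu_\bp$ factorizes over the coordinates, $-\sum_{S}\mu_\bp(S)\log\mu_\bp(S)$ splits into a sum of single-level entropies: levels $1,\dots,l$ contribute the full entropy of $\bp_i$, while levels $l+1,\dots,k$ contribute only the marginal entropy, where I use the identity $\sum_{w\in\D_i}p_i(w)\log q_i(w)=\sum_j q_i(j)\log q_i(j)$. This produces precisely the quantity $H_k(\bp)$ of \eqref{def_Hp}; the case $l>k$ is identical with $q_i$ replaced by $\widehat q_i$. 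For step (iii), \eqref{def_k} gives $\sum_{i=1}^{k-1}\log m_i<n\log 2\le\sum_{i=1}^k\log m_i$ when $\delta=2^{-n}$, so $n\log 2$ and $S_k:=\sum_{i=1}^k\log m_i$ differ by at most $\log N^+$. Since $H_k(\bp)$ is constant on each scale-range with $k(\delta)=k$, since $S_k/S_{k-1}\to1$ and $H(\mu_\bp,\mathcal{S}_\delta)/S_k$ is bounded (the partition entropy is at most $\log$ of the atom count, which is $O(S_k)$), and since $k(2^{-n})$ assumes every large value (each range $(\sum_{i=1}^{k-1}\log m_i,\sum_{i=1}^k\log m_i]$ has length $\log m_k\ge\log 2$ and so contains some $n\log 2$), a squeezing argument converts $\limsup_n H(\mu_\bp,\mathcal{S}_{2^{-n}})/(n\log 2)$ into $\limsup_k H_k(\bp)/S_k$, and likewise for the lower limit.

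I expect step (i) to be the main obstacle. The family $\mathcal{S}_\delta$ is only an \emph{approximate}-square partition: its atoms are genuinely eccentric rectangles that can overlap along their boundaries, so the uniform bounded-multiplicity comparison with $\mathcal{M}_n$, together with the verification that boundary overlaps do not corrupt the entropy count, must be carried out carefully, and this is exactly the place where the finiteness of $N^+$ is indispensable.
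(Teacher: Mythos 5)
Your proposal is correct and follows essentially the same route as the paper's proof: a bounded-multiplicity comparison between the dyadic grid $\mathcal{M}_n$ and the approximate squares $\mathcal{S}_\delta$ (the paper phrases this via the concavity estimates of Lemma~\ref{lem_ent}, which is equivalent to your conditional-entropy bound $H(\mathcal{A}\mid\mathcal{B})\le\log C$), followed by the exact factorized computation of the approximate-square entropy giving $H_k(\bp)$, and finally the change of normalization from $\log 2^n$ to $\sum_{i\le k}\log m_i$ using \eqref{def_k}. Your step (iii) is in fact carried out in more detail than in the paper, which passes from $\limsup_n$ to $\limsup_k$ without comment; also, the atoms of $\mathcal{S}_\delta$ are not ``genuinely eccentric'' but have eccentricity bounded by $N^+$, exactly as you note earlier, which is what makes the multiplicity constant uniform.
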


To study the Hausdorff dimension of the self-affine Moran measures, it often requires certain separation conditions. We will introduce two such conditions from geometric and measure aspects. Given $k>0$, the set $\D_k$ is centred if $i\notin \{0,N-1\}$ and $j\notin \{0,M-1\}$ for every $(i,j)\in\D_k$. We say $E$ satisfies the \textit{frequency separation condition (FSC) } if there exists $c>0$ such that
$$
\lim_{n\to\infty} \frac{\card\{k:\D_k \textit{ is centred for }  k=1,\ldots,n\}}{n} = c.
$$
Given $\bp \in \mathcal{P}$, we say the self-affine Moran measure $\mu_\bp$ satisfies the \textit{measure separation condition (MSC)} if there exists a constant $0<C<1$ such that for each $k>0$, $$\max\{q_k(0),q_k(m_k-1),\widehat{q}_k(0),\widehat{q}_k(n_k-1)\}<C.$$

Next we show that the dimension formulas hold under either of FSC and MCS.
\begin{thm}\label{dim_murm}
Let $E$ be the self-affine Moran set defined by ~\eqref{attractor} with  $N^+<\infty$. Given $\bp \in \mathcal{P}$, let $\mu_\bp$ be the self-affine Moran Measure defined by~\eqref{projmu}. Suppose that either $E$ satisfies FSC or $\mu_\bp$ satisfies MSC. Then
\begin{eqnarray*}
\hdd \mu_\bp&=&\liminf_{k \to \infty} \frac{H_k(\bp)}{\sum_{i=1}^{k} \log m_i}; \\
\pkd  \mu_\bp&=&\limsup_{k \to \infty} \frac{H_k(\bp)}{\sum_{i=1}^{k} \log m_i}.
\end{eqnarray*}
\end{thm}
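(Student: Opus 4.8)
The plan is to reduce both identities to a computation of local dimensions carried out through the $\delta$-approximate squares of \eqref{appsquare}. For any finite Borel measure one has the standard identities
$$
\hdd\mu_\bp=\operatorname{ess\,inf}_{\mu_\bp}\lld\mu_\bp(x),\qquad
\pkd\mu_\bp=\operatorname{ess\,inf}_{\mu_\bp}\uld\mu_\bp(x),
$$
valid without any assumption on the measure (see \cite{FanLR02,Bk_KJF2}), so it is enough to pin down $\lld\mu_\bp(x)$ and $\uld\mu_\bp(x)$ for $\mu_\bp$-a.e.\ $x$. By \eqref{def_k} and \eqref{def_l} every $S(\delta,x)\in\mathcal S_\delta$ has both side lengths comparable to $\delta$ (within a factor depending only on $N^+$), so $x\in S(\delta,x)\subseteq B(x,C\delta)$ for an absolute constant $C$. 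Moreover, on each $\delta$-interval on which $k(\delta)$ and $l(\delta)$ are constant the map $\delta\mapsto\log\mu_\bp(S(\delta,x))/\log\delta$ is monotone, so the liminf and limsup as $\delta\to0$ may be evaluated along the discrete scales $\delta_k=\prod_{i=1}^{k}m_i^{-1}$.

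First I would evaluate the local dimension along approximate squares. By \eqref{muas} and \eqref{nuas}, $\log\mu_\bp(S(\delta,x))$ is a sum of the \emph{independent} (under $\nu_\bp$) contributions $\log p_i(w_i)$, $\log q_i(w_i)$ and $\log\widehat q_i(w_i)$, whose expectations assemble exactly into $H_k(\bp)$ as in \eqref{def_Hp}; the only bookkeeping is that the type of the $i$-th summand switches between the $p$- and the $q$- (resp.\ $\widehat q$-) family as $l(k)$ overtakes or is overtaken by $k$, which is handled by splitting the running sums. Since $N^+<\infty$ forces $\card\D_i\le (N^+)^2$ and $\sup_{0<p\le 1}p(\log p)^2<\infty$, these summands have uniformly bounded second moments, and as $\sum_{i=1}^{k}\log m_i\ge k\log 2$, Kolmogorov's strong law for independent variables gives $\log\mu_\bp(S(\delta,x))=H_k(\bp)+o(k)$ for $\mu_\bp$-a.e.\ $x$, while \eqref{def_k} gives $\log\delta=-\sum_{i=1}^{k}\log m_i+O(1)$. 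Hence, for $\mu_\bp$-a.e.\ $x$,
$$
\lld\mu_\bp(x)=\liminf_{k\to\infty}\frac{H_k(\bp)}{\sum_{i=1}^{k}\log m_i},\qquad
\uld\mu_\bp(x)=\limsup_{k\to\infty}\frac{H_k(\bp)}{\sum_{i=1}^{k}\log m_i}.
$$
Combined with the inclusion $S(\delta,x)\subseteq B(x,C\delta)$ this already yields the two \emph{upper} bounds $\hdd\mu_\bp\le\liminf_k H_k(\bp)/\sum_{i=1}^{k}\log m_i$ and $\pkd\mu_\bp\le\limsup_k H_k(\bp)/\sum_{i=1}^{k}\log m_i$, and these require no separation hypothesis.

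It remains to prove the matching lower bounds, i.e.\ that for $\mu_\bp$-a.e.\ $x$ the local dimensions computed with balls are no smaller than those computed with approximate squares. Equivalently, one must show $\mu_\bp(B(x,r))\le C_x(r)\,\mu_\bp(S(\delta,x))$ with $r\asymp\delta$ and $\log C_x(r)=o(\log(1/r))$. A ball of radius $r$ meets only boundedly many approximate squares of generation $\delta$, so the task is to bound the $\mu_\bp$-measure of the neighbours of $S(\delta,x)$ by a subexponential factor times $\mu_\bp(S(\delta,x))$. The danger, and what I expect to be the main obstacle, is a long \emph{carry}: when the terminal digits of $x$ are extreme (some $j_n\in\{0,m_n-1\}$ or $i_n\in\{0,n_n-1\}$) over a long run, the adjacent approximate square differs from $S(\delta,x)$ in many coordinates, and the resulting measure ratio could grow exponentially in the length of that run.

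This is exactly where the two hypotheses enter, via different mechanisms. Under MSC the extreme marginals satisfy $\max\{q_k(0),q_k(m_k-1),\widehat q_k(0),\widehat q_k(n_k-1)\}<C<1$, so the probability that $x$ has an extreme digit at level $n$ is bounded by $C$ uniformly; a Borel--Cantelli argument then shows that for $\mu_\bp$-a.e.\ $x$ the runs of extreme digits ending near level $k$ have length $O(\log k)=o(k)$, whence every carry spans $o(k)$ levels and the neighbour ratio is $\exp(o(k))$, as required. Under FSC the centred levels, on which no digit is extreme, recur with positive frequency, and since a centred level blocks every carry \emph{for all points}, this geometric separation prevents the measure from concentrating on approximate-square boundaries and again yields the subexponential neighbour bound; verifying that the positive density of centred levels is enough to control the carries at the scales relevant to the liminf and limsup is the delicate point of this case. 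In either case the ball and approximate-square values of $\lld\mu_\bp$ and $\uld\mu_\bp$ coincide, and the essential-infimum identities deliver the two lower bounds, completing the proof.
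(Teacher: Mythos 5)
Your proposal follows the same overall strategy as the paper: the law of large numbers applied to the independent digit contributions to get $\log\mu_\bp(S(\delta,x))=H_k(\bp)+o(k)$ for $\mu_\bp$-a.e.\ $x$ (your ``splitting of the running sums'' is exactly how the paper handles the switch between the $p$-, $q$- and $\widehat q$-families), the trivial inclusion $S(\delta,x)\subset B(x,C\delta)$ for the two upper bounds, and a separation argument to reverse the ball/approximate-square comparison, finished by a Frostman-type lemma (the paper's Lemma~\ref{lem_frost}; note that with the paper's convention $\hdd\mu=\inf\{\hdd A:\mu(A^c)=0\}$ the identities you quote should be essential \emph{suprema} of the local dimensions, not infima --- harmless here only because you prove a.e.\ constancy). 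The first genuine problem is in your treatment of the lower bounds: you reduce them to bounding the measure of the \emph{neighbours} of $S(\delta,x)$ by $e^{o(k)}\mu_\bp(S(\delta,x))$, and then claim this follows because carries span $o(k)$ levels. That implication is a non sequitur: run-length control of extreme digits is purely geometric (it bounds the distance from $x$ to $\partial S_k(x)$) and says nothing about measure ratios of adjacent squares, which can be exponentially large no matter how short the carry is, since neither MSC nor FSC bounds any $p_i(w)$, $q_i(\cdot)$, $\widehat q_i(\cdot)$ from \emph{below} (already two squares differing in a single digit have ratio $q_k(j_k\pm1)/q_k(j_k)$, which is unbounded). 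What run-length control does deliver --- and what the paper's MSC argument uses --- is the inclusion $B\bigl(x,(m_1\cdots m_k)^{-1}e^{-\sqrt{k}}\bigr)\cap E\subset S_k(\bw)$ for a.e.\ $x$, after which $\mu_\bp(B)\le\mu_\bp(S_k(\bw))$ with no neighbour measures ever appearing, the subexponential shrinkage of the radius being harmless for local dimensions. With that substitution your MSC half becomes the paper's proof.

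The second and more serious gap is the FSC case, which you explicitly leave open (``verifying that the positive density of centred levels is enough \dots is the delicate point''): that verification is most of the paper's work, and it is not a ``subexponential neighbour bound'' at all. The paper proves Theorem~\ref{thm_muHP} under the weaker BSC and deduces Theorem~\ref{dim_murm} from the observation that FSC implies BSC. The mechanism there is deterministic and scale-shifted rather than measure-theoretic: for every $\xi>0$ and all large $k$, an empty level in $((1-\xi)k,k)$ \emph{and} one in the corresponding window of the $l$-scale force, for \emph{every} point $\bw$, the sandwich $S_{k'}(\bw)\subset B(\Pi(\bw),\rho)\subset S_{k''}(\bw)$ with $k'=k+1$ and the coupled choice $k''=\min\{(1-\xi)k,\,k((1-\xi)^2l)\}$ (needed because the horizontal and vertical scales advance at different speeds). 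This costs a scale loss of order $\xi k$, which is then removed using $|H_{k''}(\bp)-H_k(\bp)|=O(\xi k)$, $\sum_{i\le k}\log m_i\ge k\log 2$, and letting $\xi\to0$. Your observation that positive frequency of centred levels yields gaps of length $o(k)$ is a correct starting point, but without this two-coordinate bookkeeping and the entropy-continuity step the liminf/limsup formulas do not follow; as written, your proposal establishes at most the MSC half of the theorem.
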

In section~\ref{dimHP}, the FSC is  replaced by a weaker condition, called boundary separation condition, see Theorem~\ref{thm_muHP} ,  and the Hausdorff and packing dimensions of measures are studies under the weak condition. Such geometric separation conditions are also useful to study the dimensions of sets.

It would be ideal that the supreme dimension of self-affine Moran measures equals the dimension of the sets, but we only obtain the equality under geometric separation conditions in the following special case, see Corollary~\ref{cor_BSC} as well.
\begin{cor}\label{cor_dim}
Let $E$ be the self-affine Moran set defined by ~\eqref{attractor} with  $N^+<\infty$.  Suppose that  $E$ satisfies FSC, and for all $k>0$, $n_k\geq m_k$, and $r_k(j)=c_k$ for all $j$ such that $r_k(j)\neq 0$. Then there exists $\bp\in \mathcal{P}$ such that
\begin{eqnarray*}
\hdd \mu_\bp&=&\max\{\hdd \mu_{\bp'} : \bp'\in \mathcal{P}\}=\hdd E;\\
\pkd \mu_\bp&=&\max\{\pkd \mu_{\bp'} : \bp'\in \mathcal{P}\}=\pkd E.
\end{eqnarray*}

\end{cor}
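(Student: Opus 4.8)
The plan is to reduce everything to the entropy ratios supplied by Theorem~\ref{dim_murm} and then to produce a single probability vector that maximises those ratios at every scale. Since FSC is assumed, Theorem~\ref{dim_murm} gives $\hdd\mu_\bp=\liminf_{k}a_k(\bp)$ and $\pkd\mu_\bp=\limsup_{k}a_k(\bp)$, where I abbreviate $a_k(\bp)=H_k(\bp)/\sum_{i=1}^k\log m_i$. Consequently, if I can exhibit $\bp^*\in\mathcal P$ with $a_k(\bp^*)\ge a_k(\bp)$ for every $k$ and every $\bp\in\mathcal P$, then $\bp^*$ simultaneously maximises the liminf and the limsup, hence maximises both $\hdd\mu_{\bp'}$ and $\pkd\mu_{\bp'}$ over $\bp'\in\mathcal P$. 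Because $n_k\ge m_k$ for all $k$, the products satisfy $\prod_{i=1}^k n_i^{-1}\le\prod_{i=1}^k m_i^{-1}$, so the defining relation~\eqref{def_l} forces $l(k)\le k$ for every $k$; thus only the first branch of $H_k(\bp)$ in~\eqref{def_Hp} is ever active, which simplifies the bookkeeping.

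Next I analyse $H_k(\bp)$ level by level. Writing $H(\cdot)$ for Shannon entropy and $q_i$ for the row marginal of $p_i$, the contribution of level $i$ to $H_k(\bp)$ is the full entropy $H(p_i)$ when $i\le l(k)$ and the row-marginal entropy $H(q_i)$ when $l(k)<i\le k$, since grouping by the second coordinate gives $\sum_{w=(s,j)}p_i(s,j)\log q_i(j)=\sum_j q_i(j)\log q_i(j)$. Let $t_i$ be the number of non-empty rows of $\D_i$, so that $r_i=t_i c_i$ by the hypothesis $r_i(j)=c_i$. The key claim is that the uniform vector $\bp^*$, defined by $p^*_k(w)=1/r_k$ for every $w\in\D_k$, maximises every one of these contributions at once. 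On the one hand $H(p_i)\le\log r_i$, with equality exactly for the uniform joint law; on the other hand $H(q_i)\le\log t_i$, with equality exactly when the row marginal is uniform. For $\bp^*$ the row marginal is $q_i(j)=c_i/r_i=1/t_i$ on each non-empty row, so $\bp^*$ attains $H(p_i)=\log r_i$ and $H(q_i)=\log t_i$ simultaneously. This is precisely where the equal-row-size hypothesis $r_k(j)=c_k$ is indispensable: without it the maximiser of $H(p_i)$ and the maximiser of $H(q_i)$ need not coincide, and no single vector would dominate term by term.

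Since every summand is maximised by $\bp^*$ and the denominator $\sum_{i=1}^k\log m_i$ does not depend on $\bp$, I obtain $a_k(\bp^*)\ge a_k(\bp)$ for all $k$ and all $\bp$, which yields $\hdd\mu_{\bp^*}=\max_{\bp'}\hdd\mu_{\bp'}$ and $\pkd\mu_{\bp^*}=\max_{\bp'}\pkd\mu_{\bp'}$. It remains to identify these maxima with $\hdd E$ and $\pkd E$. The inequalities $\hdd\mu_{\bp^*}\le\hdd E$ and $\pkd\mu_{\bp^*}\le\pkd E$ are automatic, as $\mu_{\bp^*}$ is a probability measure supported on $E$. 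For the reverse inequalities I invoke the set-dimension formulas for $E$ established earlier under FSC (the Hausdorff and packing dimension formulas proved in Section~\ref{dimHP}); by the symmetric structure their extremal covering counts are exactly $r_i$ and $t_i$, so these formulas equal $\liminf_k a_k(\bp^*)$ and $\limsup_k a_k(\bp^*)$ respectively, and combining the two directions gives the stated equalities. The main obstacle is the simultaneous optimisation of the previous paragraph, together with verifying that the $l(k)$-dependent splitting used in the measure computation matches the one appearing in the set-dimension formula; once these are aligned, the equal-row-size structure makes the extremal measure entropy ratio and the set dimension literally coincide.
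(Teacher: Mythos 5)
Your first step is correct, and it is in fact a cleaner route to part of the statement than the paper takes: since $n_k\ge m_k$ forces $l(k)\le k$, and since under the hypothesis $r_k(j)=c_k$ the uniform vector $\bp^*$ with $p_k^*(w)=1/r_k$ has row marginal $q_k(w)=1/s_k$, the vector $\bp^*$ attains the joint-entropy bound $\log r_i$ and the marginal-entropy bound $\log s_i$ at every level simultaneously, so it dominates $H_k(\bp)$ term by term and, via Theorem~\ref{dim_murm}, maximises both $\hdd\mu_{\bp'}$ and $\pkd\mu_{\bp'}$ over $\mathcal{P}$. The paper never argues this way; it obtains maximality within $\mathcal{P}$ only as a byproduct of the stronger fact $\hdd\mu_{\bp^*}=\hdd E$.

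The genuine gap is in your final paragraph, in the identification with the set dimensions. You ``invoke the set-dimension formulas for $E$ established earlier under FSC (proved in Section~\ref{dimHP})'', but Section~\ref{dimHP} contains no dimension formula for the set $E$: Theorem~\ref{thm_muHP} is a statement about measures only, and no formula for $\hdd E$ is proved anywhere in the paper prior to this corollary --- the corollary itself is how the paper derives $\hdd E$ in this special case, so you are citing a result that does not exist. (The packing half of your argument can be repaired, since Theorem~\ref{thmB} gives $\pkd E=d^*$, which visibly equals $\limsup_k a_k(\bp^*)$ for the uniform $\bp^*$; but there is no analogous theorem for Hausdorff dimension.) What is actually needed, and what the paper's proof of Corollary~\ref{cor_BSC} supplies, is the observation that for $\bp^*$ the measure of \emph{every} approximate square $S_k(\bw)$ equals exactly $(r_1\cdots r_l\, s_{l+1}\cdots s_k)^{-1}$, with no law-of-large-numbers error term and no exceptional set; combined with the BSC geometry relating balls to approximate squares, this makes the local-dimension estimate $\liminf_{\rho\to0}\log\mu_{\bp^*}(B(x,\rho))/\log\rho=\liminf_k a_k(\bp^*)$ hold at \emph{every} point $x\in E$, not merely $\mu_{\bp^*}$-almost everywhere, and then \cite[Proposition 2.3]{Falco97} yields $\hdd E=\hdd\mu_{\bp^*}$ (similarly for packing). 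Maximality of $\bp^*$ within the product class $\mathcal{P}$ cannot substitute for this: the variational principle $\hdd E=\sup\{\hdd\mu\}$ runs over all Borel measures supported on $E$, not just the self-affine Moran ones, so without the everywhere-valid local estimate you only get $\hdd\mu_{\bp^*}\le\hdd E$, never the reverse inequality.
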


Next, we state  our conclusions on the dimension of self-affine Moran sets. We write
\begin{eqnarray*}
  r_k(j)&=&\operatorname{card}\{i\colon (i,j)\in \mathcal{D}_k \textrm{ for each } j\}, \\
  r_k^+&=&\max\{r_k(j): j=0,1, \ldots , m_k-1\}, \\
  r_k^-&=&\min\{r_k(j): r_k(j)\neq 0, j=0,1, \ldots , m_k-1\}, \\
  \widehat{r}_k(i)&=&\operatorname{card}\{j\colon (i,j)\in \mathcal{D}_k \textrm{ for each } i\}, \\
  \widehat{r}_k^+&=&\max\{\widehat{r}_k(i): i=0,1, \ldots , n_k-1\},\\
  \widehat{r}_k^-&=&\min\{\widehat{r}_k(i): \widehat{r}_k(i)\neq 0, i=0,1, \ldots , n_k-1\} \\
  s_k&=&\operatorname{card}\{j\colon (i,j)\in \mathcal{D}_k \textrm{ for some } i\},  \\
\widehat{s}_k&=&\operatorname{card}\{i\colon (i,j)\in \mathcal{D}_k \textrm{ for some } j\}.
\end{eqnarray*}

For each integer $k>0$, let $l=l(k)$ be given by~\eqref{def_l}, and we write
\begin{equation}\label{def_Nlk}
  N_{l,k}(E) =\left\{
  \begin{array}{lcc}
  r_1\ldots r_l s_{l+1} \ldots s_k, & & l \leq k,\\
  r_1\ldots r_k s_{k+1} \ldots s_l, & & l > k.
  \end{array} \right.
\end{equation}

We write
\begin{eqnarray}\label{}
  d^*=\limsup_{k\to \infty} \frac{N_{l,k}(E)}{\log m_1 \ldots m_k},&&
  d_*=\liminf_{k\to \infty} \frac{N_{l,k}(E)}{\log m_1 \ldots m_k}.
\end{eqnarray}
The box dimension and packing dimension of $E$ are bounded by $d^*$ and $d_*$.
\begin{thm}\label{thmB}
Let $E$ be the self-affine Moran set defined by ~\eqref{attractor} with  $N^+<\infty$. The packing dimension,  upper Box dimension and lower Box dimension of $E$ are given  by
$$
\pkd E=\ubd E=d^*; \qquad \lbd E=d_*.
$$
\end{thm}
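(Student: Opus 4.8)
The plan is to route everything through the $\delta$-approximate squares of~\eqref{appsquare}, converting the geometric covering problem into the combinatorial count $\card\mathcal{S}_\delta=N_{l,k}(E)$. First I would check that the projections $\{\Pi(U):U\in\mathcal{U}_\delta\}$ form an efficient cover of $E$ at scale $\delta$. Each such set is the $\Pi$-image of a cylinder whose $x$-extent $\prod_{h=1}^{l}n_h^{-1}$ and $y$-extent $\prod_{h=1}^{k}m_h^{-1}$ both lie between $(N^+)^{-2}\delta$ and $\delta$, by~\eqref{def_k}--\eqref{def_l}; hence every $S\in\mathcal{S}_\delta$ sits inside a square of side comparable to $\delta$. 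Because the digits in each $\D_k$ are distinct grid positions, cylinders of equal level have disjoint interiors, so distinct approximate squares (indexed by distinct tuples $(i_1\cdots i_l,j_1\cdots j_k)$) have disjoint interiors; combined with $N^+<\infty$ this bounds the number of approximate squares meeting a fixed $\delta$-mesh cube, and dually the number of mesh cubes meeting a fixed approximate square. Thus the covering number satisfies $N_\delta(E)\asymp\card\mathcal{S}_\delta$ with constants depending only on $N^+$. Counting the index data in $U(\delta,\bw)$ — a full digit $w_n\in\D_n$ for $n\le l$ and only a row (resp.\ column) index for the remaining levels up to $\max\{k,l\}$ — gives $\card\mathcal{S}_\delta=N_{l,k}(E)$ exactly as in~\eqref{def_Nlk}.

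Taking logarithms, $-\log\delta$ and $\log(m_1\cdots m_k)$ differ by at most $\log m_k\le\log N^+$ by~\eqref{def_k}, so
$$
\frac{\log N_\delta(E)}{-\log\delta}=\frac{\log N_{l,k}(E)+O(1)}{\log(m_1\cdots m_k)+O(1)},\qquad k=k(\delta),\ l=l(k).
$$
As $\delta$ ranges over an interval on which $k(\delta)$ is constant equal to $k$, both $l(k)$ and $N_{l,k}(E)$ are constant while the denominator varies over a range of bounded multiplicative width (again since $m_k\le N^+$). Because the ratio $\log N_{l,k}(E)/\log(m_1\cdots m_k)$ appearing in $d^*$ and $d_*$ is bounded, the $O(1)$ corrections and the width both wash out in the limit, so the $\limsup$ and $\liminf$ over continuous $\delta\to0$ coincide with those over the integer sequence $k\to\infty$. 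This yields $\ubd E=d^*$ and $\lbd E=d_*$ simultaneously.

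For the packing dimension only $\pkd E\ge d^*$ needs work, since $\pkd E\le\ubd E=d^*$ always. Here I would exploit invariance under passing to sub-copies. For $\bw\in\Sigma^{p}$ the projection formula gives $\Pi([\bw])=\Phi_{\bw}(E^{(p)})$, where $E^{(p)}$ is the self-affine Moran set generated by the shifted data $\{(n_k,m_k,\D_k)\}_{k>p}$ and $\Phi_{\bw}$ is an invertible affine map, hence bi-Lipschitz. Since $E=\bigcup_{\bw\in\Sigma^{p}}\Phi_{\bw}(E^{(p)})$ is a finite union of bi-Lipschitz copies of $E^{(p)}$, finite stability and bi-Lipschitz invariance of upper box dimension give $\ubd E^{(p)}=\ubd E=d^*$. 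Now any open set $V$ with $V\cap E\neq\emptyset$ contains some cylinder $J_{\bw}$ of sufficiently high level $p$, whence $\Phi_{\bw}(E^{(p)})\subseteq V\cap E$ and $\ubd(V\cap E)\ge\ubd E^{(p)}=d^*$; combined with $\ubd(V\cap E)\le\ubd E=d^*$ this forces $\ubd(V\cap E)=d^*$ for \emph{every} such $V$. Identifying $\pkd E$ with the modified upper box-counting dimension and invoking Baire category finishes the argument: in any countable cover $E\subseteq\bigcup_i E_i$ one may pass to the relatively closed pieces $\overline{E_i}\cap E$, and since $E$ is compact some piece contains a relatively open set $V\cap E$, so $\ubd E_i\ge\ubd(V\cap E)=d^*$; taking the infimum over covers gives $\pkd E\ge d^*$.

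The main obstacle is the first step: establishing cleanly that the approximate-square family is simultaneously a valid cover from above and essentially disjoint from below, with comparability constants controlled by $N^+$ and uniform in $\delta$. The difficulty is genuinely anisotropic — the two coordinate directions contract at different rates, so a single approximate square is assembled from many sub-cylinders in the faster-contracting direction, and one must verify that this does not inflate the covering count beyond $N_{l,k}(E)$. The hypothesis $N^+<\infty$ is precisely what keeps every such local multiplicity bounded and makes the reduction from continuous $\delta$ to the discrete scales $\prod_{h=1}^{k}m_h^{-1}$ legitimate.
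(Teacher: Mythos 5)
Your proposal is correct and takes essentially the same approach as the proof the paper relies on (the paper omits its own argument, deferring to \cite{GM22}): box-counting dimensions via the bijection between covering numbers at scale $\delta$ and the count $N_{l,k}(E)$ of approximate squares, reduction from continuous $\delta$ to the discrete scales $(m_1\cdots m_k)^{-1}$ using $N^+<\infty$, and the packing lower bound via bi-Lipschitz sub-copies $\Pi([\bw])=\Phi_{\bw}(E^{(p)})$ together with the standard Baire-category identification of packing dimension with modified upper box dimension for compact sets whose relatively open pieces all have full upper box dimension. Note only that your reading silently corrects a typo in the paper's definition of $d^*$ and $d_*$, where the numerator should be $\log N_{l,k}(E)$ rather than $N_{l,k}(E)$, which is clearly the intended statement.
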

The proof of the theorem is similar to the one of ~\cite[Theorem2.1]{GM22}, and we omit it.

Finally,we state the conclusions on lower and Assouad dimensions for self-affine Moran sets.  For  integers $k$ and $k'$ such that $k'>k>1$, let $l=l(k)$ and $l'=l'(k')$ be given by~\eqref{def_l}, we write
\begin{equation}\label{def_Nkk-}
  N_{k,k'}^-(E)= \left\{
  \begin{array}{lcc}
    r_{l+1}^- \ldots r_{l'}^- s_{k+1} \ldots s_{k'}, &  & l < l'\leq k <k', \\
    r_{l+1}^- \ldots r_k^- r_{k+1} \ldots r_{l'} s_{l'+1} \ldots s_{k'}, &  & l \leq k < l' \leq k', \\
    r_{l+1}^- \ldots r_k^- r_{k+1} \ldots r_{k'} \widehat{s}_{k'+1} \ldots \widehat{s}_{l'}, &  & l \leq k < k' \leq l' , \\
    \widehat{r}_{k+1}^- \ldots \widehat{r}_l^- r_{l+1} \ldots r_{l'} s_{l'+1} \ldots s_{k'}, &  & k \leq l < l' \leq k' , \\
    \widehat{r}_{k+1}^- \ldots \widehat{r}_l^- r_{l+1} \ldots r_{k'} \widehat{s}_{k'+1} \ldots \widehat{s}_{l'}, &  & k \leq l < k' \leq l', \\
    \widehat{r}_{k+1}^- \ldots \widehat{r}_{k'}^- \widehat{s}_{l+1} \ldots \widehat{s}_{l'}, &  & k < k' \leq l < l'.
  \end{array} \right.
\end{equation}
and
\begin{equation}\label{def_Nkk+}
  N_{k,k'}^+(E)= \left\{
  \begin{array}{lcc}
    r_{l+1}^+ \ldots r_{l'}^+ s_{k+1} \ldots s_{k'}, &  & l < l'\leq k <k', \\
    r_{l+1}^+ \ldots r_k^+ r_{k+1} \ldots r_{l'} s_{l'+1} \ldots s_{k'}, &  & l \leq k < l' \leq k', \\
    r_{l+1}^+ \ldots r_k^+ r_{k+1} \ldots r_{k'} \widehat{s}_{k'+1} \ldots \widehat{s}_{l'}, &  & l \leq k < k' \leq l' , \\
    \widehat{r}_{k+1}^+ \ldots \widehat{r}_l^+ r_{l+1} \ldots r_{l'} s_{l'+1} \ldots s_{k'}, &  & k \leq l < l' \leq k' , \\
    \widehat{r}_{k+1}^+ \ldots \widehat{r}_l^+ r_{l+1} \ldots r_{k'} \widehat{s}_{k'+1} \ldots \widehat{s}_{l'}, &  & k \leq l < k' \leq l', \\
    \widehat{r}_{k+1}^+ \ldots \widehat{r}_{k'}^+ \widehat{s}_{l+1} \ldots \widehat{s}_{l'}, &  & k < k' \leq l < l'.
  \end{array} \right.
\end{equation}

The following theorem shows that the lower and Assouad dimension of $E$ may be estimated by $N_{k,k'}^-(E)$ and $N_{k,k'}^+(E)$.
\begin{thm}\label{thmL}
Let $E$ be the self-affine Moran set defined by ~\eqref{attractor} with  $N^+<\infty$.  The lower dimension and the Assouad dimension  of $E$ are given by
$$
\lwd E = \lim_{m \to \infty} \inf_{k}\left\{ \frac{\log N_{k,k+m}^-(E)}{\log m_{k+1} \ldots m_{k+m}}\right\} ;
$$
$$
\asd E = \lim_{m \to \infty} \sup_{k}\left\{ \frac{\log N_{k,k+m}^+(E)}{\log m_{k+1} \ldots m_{k+m}}\right\}.
$$

\end{thm}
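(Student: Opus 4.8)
The plan is to reduce both dimensions to a counting problem for approximate squares and then to extract the exponential growth rate of these counts as the ratio of scales tends to infinity. Since $N^+<\infty$, every $\delta$-approximate square $S\in\mathcal{S}_\delta$ has both side lengths comparable to $\delta$, with comparison constants depending only on $N^+$; consequently a ball $B(x,R)$ meets only boundedly many approximate squares of scale comparable to $R$, and the covering number $N_r(B(x,R)\cap E)$ is comparable, up to a multiplicative constant depending only on $N^+$, to the number of scale-$\approx r$ approximate squares contained in a fixed scale-$\approx R$ approximate square containing $x$. I would first make this reduction precise, so that all subsequent estimates can be carried out purely combinatorially on $\mathcal{S}_\delta$.

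The heart of the argument is a counting lemma. Fix a coarse approximate square $S$ at vertical level $k=k(R)$, with horizontal level $l=l(k)$; I would count the approximate squares at vertical level $k'=k(r)$, horizontal level $l'=l'(k')$, that lie inside $S$. Passing from $S$ to a sub-square amounts to prescribing the extra fixed digits, and at each level $n$ in the window a single branching factor appears: $r_n^{\pm}$ on the levels $l<n\le k$ where the horizontal coordinate catches up inside an already fixed vertical fibre, the full count $r_n$ on levels where both coordinates become newly fixed, and $s_n$ or $\widehat{s}_n$ on levels where only one coordinate is newly fixed (with $\widehat{r}_n^{\pm},\widehat{s}_n$ playing the symmetric roles when the orientation is reversed, i.e.\ $l>k$). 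Sorting out which factor occurs according to the relative order of $k,k',l,l'$ reproduces exactly the six cases of $N_{k,k'}^-(E)$ and $N_{k,k'}^+(E)$; taking the minimum over the position of $S$ (i.e.\ over the fixed digit in each fibre) yields $N_{k,k'}^-(E)$ and the maximum yields $N_{k,k'}^+(E)$. This case analysis is routine but is where the combinatorial bookkeeping is concentrated.

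For the Assouad dimension I would argue as follows. Given $0<r<R$ and $x\in E$, set $k=k(R)$, $k'=k(r)$ and $m=k'-k$; the counting lemma gives $N_r(B(x,R)\cap E)\le C\,N_{k,k'}^+(E)$ with $R/r\asymp m_{k+1}\cdots m_{k'}$, so
$$
\frac{\log N_r(B(x,R)\cap E)}{\log(R/r)}\le \sup_{k}\frac{\log N_{k,k+m}^+(E)}{\log(m_{k+1}\cdots m_{k+m})}+o(1),
$$
where the $o(1)$ absorbs $\log C$ and the bounded eccentricity and tends to $0$ as $R/r\to\infty$. This gives $\asd E\le \lim_{m\to\infty}\sup_k(\cdots)$. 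For the reverse inequality, for each $m$ and each $k$ I would place $x$ inside a coarse square whose fixed digits realise the maximal branching $r_n^+$ (resp.\ $\widehat{r}_n^+$) at every catch-up level, producing a ball in which the fine count attains $N_{k,k+m}^+(E)$; letting $m\to\infty$ along near-optimal windows shows the rate $\sup_k(\cdots)$ is realised over arbitrarily large scale ratios, whence $\asd E\ge\lim_{m\to\infty}\sup_k(\cdots)$. The lower dimension is treated symmetrically, now using $N_{k,k'}^-(E)$, the minimal branching $r_n^-,\widehat{r}_n^-$, and the fact that every fibre of every $\D_k$ is nonempty (so $r_k(j),\widehat{r}_k(i)\ge1$ and $r_k\ge2$) to guarantee a uniform lower bound on the fine count valid for all $x,R,r$; this yields $\lwd E=\lim_{m\to\infty}\inf_k(\cdots)$.

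\textbf{The main obstacle} I expect is to justify that the limit over $m$ exists and equals the dimension rather than merely its $\limsup$/$\liminf$. The difficulty is that $N_{k,k'}^{\pm}(E)$ is not multiplicative across adjacent windows: the branching phase at a level $n$ is governed by whether $l(k)<n\le k$, and since $l(k)$ moves with the left endpoint $k$, the catch-up region occupies a non-negligible fraction of the window (its length $k-l(k)$ may grow linearly in $k$ even though $N^+<\infty$), so the same level $n$ can contribute $r_n^{\pm}$ in one window and $r_n$ in another. I would handle this by a subadditive (Fekete-type) argument: show that $\log N_{k,k'}^{\pm}(E)$ is super/submultiplicative up to an additive error arising only from the mismatch of catch-up regions at the splitting point, bound each per-level factor between $1$ and $N^+$, and control this error uniformly in the window's position so that it becomes negligible relative to $\log(m_{k+1}\cdots m_{k'})$ as the window grows. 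Establishing this error control uniformly in $k$ is the crux; once it is in place, the identification of the combinatorial limit with the geometric Assouad and lower dimensions follows from the covering-number reduction of the first paragraph.
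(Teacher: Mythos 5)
Your skeleton matches the paper's: a six-case counting lemma identifying the minimal/maximal number of fine approximate squares inside a coarse one with $N^{-}_{k,k'}(E)$ and $N^{+}_{k,k'}(E)$, a transfer between these counts and covering numbers of balls, and a convergence argument for the limits in $m$. But two steps that carry the real content are, respectively, wrong and missing. First, the comparability claim underlying your reduction is false as stated, and this breaks the upper bound for the lower dimension. If $x$ lies near the boundary of its approximate square $S_k(x)$, the ball $B(x,R)$ with $R\approx R_k$ pokes into neighbouring approximate squares, which --- since Theorem~\ref{thmL} assumes no separation condition whatsoever --- may contain far more fine squares than $S_k(x)$ does; hence $N_r(B(x,R)\cap E)$ need not be bounded above by a constant multiple of $\Gamma_{k,k'}(S_k(x))$. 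This one-sidedness is harmless for the Assouad dimension and for $\lwd E\ge\lim_m\inf_k(\cdots)$, since those directions only need a square contained in a ball, or a bounded-overlap upper count. It is fatal for $\lwd E\le\lim_m\inf_k(\cdots)$: there you must convert the ball bound in the definition of lower dimension into a bound on the \emph{minimizing} square count $\Gamma^-_{k,k'}(E)=N^-_{k,k'}(E)$, which requires producing inside each $S\in\mathcal{S}_k$ a point $x\in S\cap E$ with $B(x,cR_k)\cap E\subset S$. Your ``symmetric'' treatment (placing $x$ so that a ball \emph{contains} the extremal square) gives the inequality in the wrong direction here. The paper devotes the first lemma of Section~\ref{sec_Ld} precisely to constructing such an $x$, via a case analysis of boundary digit configurations (showing that if all of $S\cap E$ hugs a side of $S$, then the adjacent squares across that side contain no point of $E$); nothing in your proposal substitutes for this.

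Second, the convergence of $\inf_k\xi_{k,k+m}$ and $\sup_k\beta_{k,k+m}$, which you rightly call the crux, is left unproven, and your plan aims at a non-issue. There is no error term to control: supermultiplicativity $N^-_{k,k''}(E)\ge N^-_{k,k'}(E)\,N^-_{k',k''}(E)$, and the dual submultiplicativity for $N^+$, hold \emph{exactly} --- either from the nested-counting identities $\Gamma^{\pm}_{k,k'}(E)=N^{\pm}_{k,k'}(E)$ of Lemmas~\ref{ld_cover} and~\ref{lem_cover}, or at the formula level from $r_n^- s_n\le r_n\le r_n^+ s_n$ and the hatted analogues. The genuine obstacle is that Fekete's lemma does not apply even granted exact supermultiplicativity, because the normalizers $\log(m_{k+1}\cdots m_{k+m})$ vary with $k$ and the quantity of interest is an infimum (resp.\ supremum) over $k$. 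The paper's Lemma~\ref{ld_cvgt} resolves this with a block argument: for $\xi<\zeta_m:=\inf_k\xi_{k,k+m}$, split any window of length $pm+n$, $0\le n<m$, into $p$ blocks of length $m$ plus a remainder, and use exact supermultiplicativity together with $2\le m_i\le N^+$ to obtain
$$
\Psi_{k,k+pm+n}(\xi)\;\ge\;2^{p(\zeta_m-\xi)}\,\min\left\{2^{n(\zeta_n-\xi)},\,(N^+)^{n(\zeta_n-\xi)}\right\},
$$
so that $\xi_{k,k+pm+n}\ge\xi$ for all large $p$, uniformly in $k$, whence $\liminf_m\zeta_m\ge\limsup_m\zeta_m$. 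Without this (or an equivalent) argument, the limits asserted in the statement are not known to exist, and your identities would at best hold with $\liminf$/$\limsup$ in place of $\lim$.
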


\section{Entropy dimensions of self-affine Moran measures}\label{sec_dime}
To investigate the entropy dimension, we need the following well-known inequality.
\begin{lem}\label{lem_ent}
The function $f:[0,\infty) \to R$ defined by
$$
f(x)=\left\{\begin{array}{ll} 0 & \textit{if  } x=0, \\ -x\log x & \textit{if  } x\neq 0        \end{array} \right.
$$
is strictly concave, and for all $x_1,\ldots,x_N\geq 0$,
$$
f\left(\sum_{i=1}^N x_i\right) \leq \sum_{i=1}^N f(x_i) \leq f\left(\sum_{i=1}^N x_i\right) + \left(\sum_{i=1}^N x_i\right)\log N.
$$
\end{lem}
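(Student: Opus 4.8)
The plan is to treat the two assertions separately: first the strict concavity of $f$, and then the two-sided entropy inequality, which I would reduce to the classical bounds on the Shannon entropy.

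For the strict concavity, I would first record that $f$ is continuous on $[0,\infty)$, since $\lim_{x\to 0^+}(-x\log x)=0=f(0)$. On the open interval $(0,\infty)$ the function is twice differentiable with $f'(x)=-\log x-1$ and $f''(x)=-1/x<0$, so it is strictly concave there, and continuity at the endpoint $0$ upgrades this to strict concavity on all of $[0,\infty)$.

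For the inequality, write $S=\sum_{i=1}^N x_i$. The case $S=0$ is trivial (every $x_i=0$ and all three quantities vanish), so I would assume $S>0$ and set $t_i=x_i/S$, making $(t_i)_{i=1}^N$ a probability vector. A direct computation then gives the key identity
$$
\sum_{i=1}^N f(x_i)=-\sum_{i=1}^N x_i\log x_i = -S\log S + S\Big(-\sum_{i=1}^N t_i\log t_i\Big)=f(S)+S\,H,
$$
where $H=-\sum_{i=1}^N t_i\log t_i$ is the Shannon entropy of $(t_i)$. With this identity in hand, the entire statement reduces to the bounds $0\leq H\leq \log N$. The lower bound $H\geq 0$ is immediate, since $-t_i\log t_i\geq 0$ for $0\leq t_i\leq 1$, and it yields the left inequality $f(S)\leq\sum_{i=1}^N f(x_i)$. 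For the upper bound I would apply Jensen's inequality to the concave logarithm over the indices with $t_i>0$,
$$
H=\sum_{t_i>0} t_i\log\frac{1}{t_i}\leq \log\Big(\sum_{t_i>0} t_i\cdot\frac{1}{t_i}\Big)=\log\big(\#\{i:t_i>0\}\big)\leq\log N,
$$
which delivers the right inequality $\sum_{i=1}^N f(x_i)\leq f(S)+S\log N$.

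Every step here is standard, so there is no genuine obstacle. The only points requiring a little care are the treatment of the boundary value $x=0$ (both for continuity in the concavity part and for the convention $0\log 0=0$ in the entropy identity) and the restriction of the Jensen step to the nonzero coordinates, so that the reciprocals $1/t_i$ are well defined.
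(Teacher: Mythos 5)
Your proof is correct. Note, however, that the paper itself offers no proof of this lemma at all: it is stated as a ``well-known inequality'' and used immediately afterwards, so there is no argument in the paper to compare against. Your route --- reducing via the normalization $t_i = x_i/S$ to the identity $\sum_i f(x_i) = f(S) + S\,H$ with $H$ the Shannon entropy of $(t_i)$, and then invoking $0 \le H \le \log N$ --- is the standard way to prove it, and all the delicate points (the convention $0\log 0 = 0$, the case $S=0$, restricting Jensen's inequality to the indices with $t_i>0$) are handled properly. The one step stated somewhat tersely is the upgrade from strict concavity on $(0,\infty)$ plus continuity at $0$ to strict concavity on $[0,\infty)$; this is a true general principle (equality in a chord inequality for a concave function forces affineness on a subsegment, contradicting strict concavity in the interior), but a one-line justification, or the direct computation $f((1-\lambda)y) = (1-\lambda)f(y) - (1-\lambda)y\log(1-\lambda) > (1-\lambda)f(y)$ for $y>0$, would make it airtight.
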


Let $f$ be the function defined in Lemma~\ref{lem_ent}. The lower and upper entropy dimension may be rewritten as
\begin{eqnarray*}
\underline{\dim}_e \mu_\bp &=& \liminf_{n\to\infty} \frac{\sum_{Q\in\mathcal{M}_n} f(\mu_\bp(Q))}{\log 2^{-n}};\\
\overline{\dim}_e \mu_\bp &=& \limsup_{n\to\infty} \frac{\sum_{Q\in\mathcal{M}_n} f(\mu_\bp(Q))}{\log 2^{-n}}.
\end{eqnarray*}

\begin{proof}[Proof of Theorem~\ref{dime}]
For each $\delta>0$, let $n$ be the integer such that $2^{-n} \leq \delta < 2^{-n+1}$. Then for each $Q\in\mathcal{M}_n$, it intersects at most $C_1= 4(N^+)^3$ approximate squares of $\mathcal{S}_\delta$, and for each $S\in\mathcal{S}_\delta$, it intersects at most $3^2$ cubes in $\mathcal{M}_n$.  Therefore by Lemma~\ref{lem_ent}, It follows that for each $Q\in \mathcal{M}_n$
\begin{equation}\label{fvs}
f(\mu_\bp(Q)) \leq \sum_{S\in\mathcal{S}_\delta} f(\mu_\bp(S\cap Q)) \leq f(\mu_\bp(Q))+ (\log C_1)\mu_\bp(Q)
\end{equation}
and
for each $S\in\mathcal{S}_\delta$,
\begin{equation}\label{fvq}
f(\mu_\bp(S)) \leq \sum_{Q\in \mathcal{M}_n} f(\mu_\bp(S\cap Q)) \leq f(\mu_\bp(S))+(2\log 3)\mu_\bp(S).
\end{equation}

Summing up ~\eqref{fvq} and ~\eqref{fvs} respectively, we obtain that
\begin{eqnarray*}
\sum_{Q\in\mathcal{M}_n} f(\mu_\bp(Q)) &\leq& \sum_{Q\in\mathcal{M}_n} \sum_{S\in\mathcal{S}_\delta}f(\mu_\bp(S\cap Q)) \leq \sum_{Q\in\mathcal{M}_n} f(\mu_\bp(Q))+\log C_1; \\
\sum_{S\in\mathcal{S}_\delta} f(\mu_\bp(S)) &\leq& \sum_{S\in\mathcal{S}_\delta} \sum_{Q\in\mathcal{M}_n} f(\mu_\bp(Q\cap S)) \leq \sum_{S\in\mathcal{S}_\delta} f(\mu_\bp(S))+2\log 3.
\end{eqnarray*}
It follows that
\begin{equation}\label{fp}
|\sum_{S\in\mathcal{S}_\delta} f(\mu_\bp(S)) - \sum_{Q\in\mathcal{M}_n} f(\mu_\bp(Q))| \leq 2\log 3 +\log C_1.
\end{equation}

Let $l=l(\delta)$ and $k=k(\delta)$ be given by $\eqref{def_l}$ and $\eqref{def_k}$. For  $l\leq k$, by induction, it follows that
\begin{eqnarray*}
\sum_{S\in\mathcal{S}_\delta} f(\mu_\bp(S)) &=& \sum_{U(\delta,\bw)\in \mathcal{U}_\delta} p_1(w_1)\ldots p_l(w_l) q_{l+1}(w_{l+1})\ldots q_k(w_k)\\
&&\hspace{3cm} \log p_1(w_1)\ldots p_l(w_l) q_{l+1}(w_{l+1})\ldots q_k(w_k) \\
&=& \sum_{\bw\in \Sigma^k} p_1(w_1)\ldots p_l(w_l) q_{l+1}(w_{l+1})\ldots q_k(w_k) \\
&&\hspace{3cm} \log p_1(w_1)\ldots p_l(w_l) \\
&&\hspace{0cm}+\sum_{\bw\in \Sigma^k} p_1(w_1)\ldots p_l(w_l) q_{l+1}(w_{l+1})\ldots q_k(w_k) \\
&&\hspace{3cm} \log  q_{l+1}(w_{l+1})\ldots q_{k}(w_{k})\\
&=& \sum_{i=1}^{l} \sum_{w\in \D_i} p_i(w)\log p_i(w) + \sum_{i=l+1}^{k} \sum_{w\in \D_i} p_i(w)\log q_i(w) .
\end{eqnarray*}
For $l> k$, similarly, we have that
\begin{eqnarray*}
\sum_{S\in\mathcal{S}_\delta} f(\mu_\bp(S)) &=& \sum_{U(\delta,\bw)\in \mathcal{U}_\delta} p_1(w_1)\ldots p_k(w_k) \widehat{q}_{k+1}(w_{k+1})\ldots \widehat{q}_l(w_l)\\
&&\hspace{3cm} \log p_1(w_1)\ldots p_k(w_k) \widehat{q}_{k+1}(w_{k+1})\ldots \widehat{q}_l(w_l) \\
&=& \sum_{i=1}^{k} \sum_{w\in \D_i} p_i(w)\log p_i(w) + \sum_{i=k+1}^{l} \sum_{w\in \D_i} p_i(w)\log \widehat{q}_i(w) .
\end{eqnarray*}
Hence, by~\eqref{def_Hp}, we obtain that
$$
\sum_{S\in\mathcal{S}_\delta} f(\mu_\bp(S)) = H_k(\bp).
$$
Combining this with \eqref{fp}, we have that
\begin{eqnarray*}
\overline{\dim}_e \mu_\bp &=& \limsup_{n\to\infty} \frac{\sum_{Q\in\mathcal{M}_n} f(\mu_\bp(Q))}{\log 2^{-n}}  \\
&=& \limsup_{\delta\to 0} \frac{\sum_{S\in\mathcal{S}_\delta} f(\mu_\bp(S))}{\log \delta} \\
&=& \limsup_{k \to \infty} \frac{H_k(\bp)}{\sum_{i=1}^{k} \log m_i}.
\end{eqnarray*}

By the same argument, we have that
\begin{eqnarray*}
\underline{\dim}_e \mu_\bp &=& \liminf_{n\to\infty} \frac{\sum_{Q\in\mathcal{M}_n} f(\mu_\bp(Q))}{\log 2^{-n}}  \\
&=& \liminf_{k \to \infty} \frac{H_k(\bp)}{\sum_{i=1}^{k} \log m_i},
\end{eqnarray*}
and we complete the proof.

\end{proof}

\section{Hausdorff and packing dimensions of self-affine Moran measures } \label{dimHP}

In this section, we study the Hausdorff and packing dimensions of self-affine Moran measures  in a weak condition.

Given $k>0$, the set $\D_k$ is left(right, bottom, top) empty if $i\neq 0(i\neq n_k-1, j\neq 0, j\neq m_k-1)$. We say $E$ is left separated if
$$
\lim_{n\to\infty} \frac{\card\{k:\D_k \textit{ is left empty for }  k=1,\ldots,n\}}{n} = c_L .
$$
Similarly, we may define $E$ is right separated, top separated and bottom separated where the limits are denoted by $c_R, c_T, c_B$ respectively.  If $E$ is left, right, bottom and top separated, we say $E$ satisfies the \textit{boundary separation condition (BSC)}.

Since BSC is  weaker than  FSC,  we prove the dimension formulas of self-affine Moran measures under BSC.
\begin{thm}\label{thm_muHP}
Let $E$ be the self-affine Moran set defined by ~\eqref{attractor} with  $N^+<\infty$. Given $\bp \in \mathcal{P}$, let $\mu_\bp$ be the self-affine Moran Measure defined by~\eqref{projmu}. Suppose that either $E$ satisfies BSC or $\mu_\bp$ satisfies $MSC$. Then
\begin{eqnarray*}
\hdd \mu_\bp&=&\liminf_{k \to \infty} \frac{H_k(\bp)}{\sum_{i=1}^{k} \log m_i}; \\
\pkd  \mu_\bp&=&\limsup_{k \to \infty} \frac{H_k(\bp)}{\sum_{i=1}^{k} \log m_i}.
\end{eqnarray*}
\end{thm}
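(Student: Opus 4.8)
The plan is to obtain both dimensions from the local dimensions $\lld\mu_\bp$ and $\uld\mu_\bp$, via the standard criteria: $\hdd\mu_\bp\le s$ (resp.\ $\pkd\mu_\bp\le s$) whenever $\lld\mu_\bp(x)\le s$ (resp.\ $\uld\mu_\bp(x)\le s$) on a set of full $\mu_\bp$-measure, and $\hdd\mu_\bp\ge s$ (resp.\ $\pkd\mu_\bp\ge s$) whenever $\lld\mu_\bp(x)\ge s$ (resp.\ $\uld\mu_\bp(x)\ge s$) on a set of full measure. Writing $\alpha=\liminf_k H_k(\bp)/\sum_{i=1}^k\log m_i$ and $\beta=\limsup_k H_k(\bp)/\sum_{i=1}^k\log m_i$, the theorem reduces to showing that, under either separation hypothesis, $\lld\mu_\bp(x)=\alpha$ and $\uld\mu_\bp(x)=\beta$ for $\mu_\bp$-a.e.\ $x$.

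First I would prove the corresponding statement for approximate squares: for $\mu_\bp$-a.e.\ $x$,
\[
\liminf_{\delta\to0}\frac{\log\mu_\bp(S(\delta,x))}{\log\delta}=\alpha,\qquad
\limsup_{\delta\to0}\frac{\log\mu_\bp(S(\delta,x))}{\log\delta}=\beta .
\]
Because $\nu_\bp$ is a product measure, \eqref{nuas} exhibits $\log\mu_\bp(S(\delta,x))$ as a sum of independent contributions over the levels $1,\dots,k(\delta)$, whose $\nu_\bp$-mean is computed exactly as in the proof of Theorem~\ref{dime}. Since $N^+<\infty$, each increment $\log p_i(w_i)$ or $\log q_i(w_i)$ has variance bounded uniformly in $i$ (as $t(\log t)^2$ is bounded on $(0,1]$ and $\card\D_i\le (N^+)^2$), while $\sum_{i\le k}\log m_i\ge k\log 2\to\infty$. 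Kolmogorov's strong law of large numbers then lets me replace $\log\mu_\bp(S(\delta,x))$ by its mean up to an error that is $o\big(\sum_{i\le k}\log m_i\big)$ for $\mu_\bp$-a.e.\ $x$; combined with $-\log\delta=\sum_{i\le k}\log m_i+O(\log N^+)$ from \eqref{def_k}, this yields the displayed identities.

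Next I would transfer this to genuine balls. Every $x$ lies in an approximate square of diameter comparable to $\delta$, so $S(\delta,x)\subseteq B(x,c\delta)$ and hence $\mu_\bp(B(x,c\delta))\ge\mu_\bp(S(\delta,x))$, giving $\lld\mu_\bp(x)\le\alpha$ and $\uld\mu_\bp(x)\le\beta$ a.e.; this already produces the upper bounds $\hdd\mu_\bp\le\alpha$ and $\pkd\mu_\bp\le\beta$ with no separation hypothesis. For the reverse inequalities I would cover $B(x,r)$ ($\delta\asymp r$) by the $O(1)$ approximate squares of $\mathcal{S}_\delta$ it meets, so that $\mu_\bp(B(x,r))\le\sum_i\mu_\bp(S_i)$; the bounded number of summands contributes only an additive $O(1)$ to $\log\mu_\bp(B(x,r))$, negligible against $\log r\to-\infty$. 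Thus the lower bounds reduce to showing that every approximate square adjacent to $S(\delta,x)$ also obeys the typical estimate $\mu_\bp(S_i)\le\delta^{\alpha-\varepsilon}$ (resp.\ the $\limsup$ version), i.e.\ to controlling the extra mass a ball can pick up across the boundary of $S(\delta,x)$.

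This boundary control is where the two hypotheses enter and is the main obstacle. Under MSC the extreme-row and extreme-column marginals $q_k(0),q_k(m_k-1),\widehat q_k(0),\widehat q_k(n_k-1)$ are uniformly bounded by $C<1$ --- precisely the mass a carry into a boundary row or column can transfer --- which bounds the multiplicative discrepancy between $S(\delta,x)$ and its neighbours, after which a Borel--Cantelli argument over the levels discards the $\mu_\bp$-null set on which such discrepancies accumulate. Under BSC the positive-density set of levels at which $\D_k$ is left, right, top or bottom empty creates genuine gaps, so that at those scales the neighbouring squares a ball could reach are empty and $B(x,r)$ is trapped in a single approximate square. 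The delicate point --- the step I expect to cost the most work --- is that BSC supplies only an \emph{asymptotic} density rather than bounded gaps between good levels, so that to bound the $\liminf$ at \emph{every} scale one must interpolate between consecutive good levels and verify that the intervening loss is absorbed by the normalisation $\sum_{i\le k}\log m_i$. Granting this, $\lld\mu_\bp(x)\ge\alpha$ and $\uld\mu_\bp(x)\ge\beta$ a.e., and combining with the upper bounds completes the proof.
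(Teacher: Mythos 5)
Your skeleton coincides with the paper's proof: the a.e.\ asymptotic $\log\mu_\bp(S_k(x))=-H_k(\bp)+o(k)$ obtained from a strong law of large numbers applied to the independent digit contributions in \eqref{nuas} (the paper uses a martingale LLN from Benoist--Quint, but Kolmogorov's SLLN works identically here), the passage to dimensions via Frostman-type criteria (the paper's Lemma~\ref{lem_frost}), the unconditional upper bounds from $S(\delta,x)\subset B(x,c\delta)$, and, for BSC, the trapping of $B(x,\rho)\cap E$ inside a slightly coarser approximate square $S_{k''}(x)$ using boundary-empty levels, with the density-versus-bounded-gaps interpolation you flag as the costly step. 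All of that is the paper's argument.

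The genuine gap is in your MSC branch, in two respects. First, MSC does \emph{not} ``bound the multiplicative discrepancy between $S(\delta,x)$ and its neighbours'': the mass of an adjacent approximate square is a product of $p_i$'s and $q_i$'s along a \emph{different} digit string, and it can exceed $\mu_\bp(S(\delta,x))$ by an unbounded factor; no estimate of the form $\mu_\bp(S_i)\le\delta^{\alpha-\varepsilon}$ for neighbours is available (nor is one needed). What MSC actually controls is the $\mu_\bp$-probability that $x$ lies near the boundary of its \emph{own} square, since such proximity forces a run of extreme digits $j_{k+1}=\cdots=j_{k+t}\in\{0,m_\cdot-1\}$, an event of probability at most $C^{t}$. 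Second, even with this corrected mechanism, Borel--Cantelli at a fixed proportion of the scale $R_k=(m_1\cdots m_k)^{-1}$ fails: proximity within, say, $\tfrac{1}{2}R_k$ of the boundary only forces a \emph{bounded} number of extreme digits, an event whose probability is bounded below whenever the extreme marginals are bounded below; these events are independent across levels, so by the second Borel--Cantelli lemma a.e.\ point is proportionally close to its boundary at infinitely many levels, and the ball really does spill into neighbouring squares infinitely often. The paper's essential device, absent from your sketch, is to shrink the radius subexponentially: one shows $B\big(x,R_k e^{-\sqrt{k}}\big)\cap E\subset S_k(x)$ off a set of measure at most $4C_1^{\sqrt{k}}$, which \emph{is} summable, while the factor $e^{-\sqrt{k}}$ is dimension-neutral because $\sqrt{k}=o\big(\sum_{i\le k}\log m_i\big)$. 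Without some such scale-dependent shrinking, the MSC half of your argument cannot close.
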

\begin{cor}\label{cor_BSC}
Let $E$ be an arbitrary self-affine Moran set defined by ~\eqref{attractor}.Suppose that  $E$ satisfies BSC, and for all $k>0$, $n_k\geq m_k$, and $r_k(j)=c_k$ for all $j$ such that $r_k(j)\neq 0$. Then there exists $\bp\in \mathcal{P}$ such that
\begin{eqnarray*}
\hdd \mu_\bp&=&\max\{\hdd \mu_{\bp'} : \bp'\in \mathcal{P}\}=\hdd E;\\
\pkd \mu_\bp&=&\max\{\pkd \mu_{\bp'} : \bp'\in \mathcal{P}\}=\pkd E.
\end{eqnarray*}

\end{cor}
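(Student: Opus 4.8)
The plan is to exhibit the uniform measure as the maximizer and then to identify the maximal value with the dimension of $E$. Since $n_k\ge m_k$ for every $k$, the index $l=l(k)$ from~\eqref{def_l} always satisfies $l\le k$, so throughout only the first branches of~\eqref{def_Hp} and~\eqref{def_Nlk} occur. First I would take $\bp\in\mathcal{P}$ to be the uniform vector $p_i(w)=1/r_i$ for all $w\in\D_i$ and all $i$, and evaluate $H_k(\bp)$. For this choice the level-$i$ distribution has (full) entropy $\log r_i$, while its row marginal $q_i(\cdot)$ equals $r_i(j)/r_i=c_i/r_i=1/s_i$ on each of the $s_i$ occupied rows, so the row marginal has entropy $\log s_i$. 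Substituting into~\eqref{def_Hp} gives $H_k(\bp)=\log\bigl(r_1\cdots r_l\,s_{l+1}\cdots s_k\bigr)=\log N_{l,k}(E)$ by~\eqref{def_Nlk}.

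Next I would prove the matching bound for an arbitrary $\bp'\in\mathcal{P}$. By the maximum-entropy inequality (Lemma~\ref{lem_ent}), the level-$i$ distribution of $\bp'$ has entropy at most $\log r_i$ and its row marginal has entropy at most $\log s_i$; summing the two blocks of~\eqref{def_Hp} yields $H_k(\bp')\le\log N_{l,k}(E)=H_k(\bp)$ for every $k$. Dividing by $\sum_{i=1}^k\log m_i>0$ gives $H_k(\bp')/\sum_{i=1}^k\log m_i\le H_k(\bp)/\sum_{i=1}^k\log m_i$ for all $k$, so passing to $\liminf$ and to $\limsup$ shows that this single uniform $\bp$ simultaneously maximizes both $\liminf_k$ and $\limsup_k$ of $H_k(\bp')/\sum_{i=1}^k\log m_i$ over $\bp'\in\mathcal{P}$. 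Because $E$ satisfies BSC, Theorem~\ref{thm_muHP} applies to every $\mu_{\bp'}$ and identifies these two quantities with $\hdd\mu_{\bp'}$ and $\pkd\mu_{\bp'}$; hence $\hdd\mu_\bp=\max_{\bp'}\hdd\mu_{\bp'}=d_*$ and $\pkd\mu_\bp=\max_{\bp'}\pkd\mu_{\bp'}=d^*$, the last equalities being the definitions of $d_*$ and $d^*$.

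Finally I would match these extrema with the dimensions of the set. Theorem~\ref{thmB} gives $d^*=\pkd E$ and $d_*=\lbd E$, so the packing assertion is immediate: $\pkd\mu_\bp=d^*=\pkd E$. For the Hausdorff assertion I would use that $\mu_\bp$ is a probability measure supported on $E$, whence $\hdd\mu_\bp\le\hdd E$, together with the universal inequality $\hdd E\le\lbd E$; combined with $\hdd\mu_\bp=d_*=\lbd E$ this forces $\hdd E=d_*=\hdd\mu_\bp$, finishing the argument.

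The step I expect to be most delicate is the evaluation $H_k(\bp)=\log N_{l,k}(E)$ for the uniform measure, since it requires the uniform distribution on $\D_i$ to attain the two entropy bounds $\log r_i$ and $\log s_i$ at the same time. This is precisely where the hypothesis $r_k(j)=c_k$ enters: it guarantees that the uniform law on $\D_i$ pushes forward to the uniform law on the occupied rows, so that the full entropy equals $\log r_i$ and the row-marginal entropy equals $\log s_i$ simultaneously. Without this uniformity the maximizing measure would need a nontrivial McMullen-type reweighting of the rows, the identity $H_k(\bp)=\log N_{l,k}(E)$ would fail, and $\hdd E$ would in general be strictly below $\lbd E=d_*$; thus the whole equality $\hdd E=\lbd E$ rests on this condition.
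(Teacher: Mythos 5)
Your proposal is correct, but it reaches the two set--measure equalities by a genuinely different route from the paper's. The paper takes the same uniform vector $p_k(w)=1/r_k$ and uses $r_k=c_k s_k$ to get $q_k(w)=1/s_k$, but it then observes that $\log\mu_\bp(S_k(\bw))=-\log\bigl(r_1\cdots r_l\,s_{l+1}\cdots s_k\bigr)$ holds \emph{exactly for every} $\bw\in\Sigma^\infty$ (no law of large numbers, no almost-sure qualifier), re-runs the BSC ball-versus-approximate-square sandwich from Theorem~\ref{thm_muHP} to conclude that the lower local dimension equals $d_*$ at \emph{every} point of $E$, and then invokes \cite[Proposition 2.3]{Falco97} to obtain $\hdd E=\hdd\mu_\bp$ in one stroke; maximality over $\mathcal{P}$ then comes for free from the variational principle $\hdd E=\sup\{\hdd\mu : \mu \text{ Borel on } E\}$, and likewise for packing. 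You instead (i) prove maximality directly, via the entropy estimates $H_k(\bp')\le\log N_{l,k}(E)=H_k(\bp)$ for every $k$ --- a clean, quantitative argument the paper never writes down, and valid because Theorem~\ref{thm_muHP} applies to every $\bp'$ under BSC --- and (ii) obtain the set--measure equality for Hausdorff dimension by citing Theorem~\ref{thmB} and squeezing $d_*=\hdd\mu_\bp\le\hdd E\le\lbd E=d_*$, with the packing case immediate from $\pkd E=d^*$. Both routes work: yours uses Theorem~\ref{thm_muHP} and Theorem~\ref{thmB} as black boxes together with the standard inequality $\hdd\le\lbd$ for compact sets, avoiding Falconer's proposition and any pointwise analysis, whereas the paper's argument never needs Theorem~\ref{thmB} and yields the extra information that the local dimension of $\mu_\bp$ exists at every point of $E$. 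One caveat common to both arguments: the corollary as stated drops the hypothesis $N^+<\infty$, yet every ingredient used (Theorem~\ref{thm_muHP}, Theorem~\ref{thmB}, and the sandwich argument) requires it, so it must be read as a standing assumption.
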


To study the dimensions of self-affine Moran measures, we need  a version of the law of large numbers.  For the readers' convenience, we cite it here, see, for example, \cite[Corollary A.8]{BYQJF} for details.

\begin{thm}\label{lln}
Let $\{X_n\}_{n=1}^\infty $ be a sequence of random variables which are bounded in $L^2$ and such that
$$
\mathbf{E} (X_n | X_1, \ldots, X_{n-1})=0
$$
for all $ n \geq 1.$
Then the sequence $\frac{1}{n}\sum_{i=1}^n X_i$ converges to 0 almost surely and in $L^2$.
\end{thm}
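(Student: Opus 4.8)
The plan is to exploit the fact that the hypothesis $\mathbf{E}(X_n\mid X_1,\dots,X_{n-1})=0$ says precisely that the partial sums $S_n=\sum_{i=1}^n X_i$ form a martingale with respect to the natural filtration $\mathcal{F}_n=\sigma(X_1,\dots,X_n)$ (with $\mathcal{F}_0$ trivial). First I would record the orthogonality of the increments: for $m<n$, since $X_m$ is $\mathcal{F}_{n-1}$-measurable,
$$
\mathbf{E}(X_mX_n)=\mathbf{E}\big(X_m\,\mathbf{E}(X_n\mid\mathcal{F}_{n-1})\big)=0.
$$
Writing $M=\sup_n\mathbf{E}(X_n^2)<\infty$ for the $L^2$-bound, orthogonality gives $\mathbf{E}(S_n^2)=\sum_{i=1}^n\mathbf{E}(X_i^2)\le nM$, whence $\mathbf{E}\big((S_n/n)^2\big)\le M/n\to 0$. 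This settles the $L^2$ convergence immediately.

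For the almost sure statement, the idea is to pass to a weighted series and then apply Kronecker's lemma. Set $Y_n=\sum_{i=1}^n X_i/i$. A direct computation shows $\mathbf{E}(Y_n\mid\mathcal{F}_{n-1})=Y_{n-1}$, so $(Y_n)$ is again a martingale, and by orthogonality of its increments $X_i/i$,
$$
\mathbf{E}(Y_n^2)=\sum_{i=1}^n\frac{\mathbf{E}(X_i^2)}{i^2}\le M\sum_{i=1}^\infty\frac{1}{i^2}<\infty.
$$
Thus $(Y_n)$ is an $L^2$-bounded martingale, and by the martingale convergence theorem it converges almost surely to a finite limit; in particular the series $\sum_i X_i/i$ converges almost surely. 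Kronecker's lemma, applied with weights $b_n=n\uparrow\infty$, then yields $\frac{1}{n}\sum_{i=1}^n X_i\to 0$ almost surely, which is the desired conclusion.

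The routine ingredients are the orthogonality computation and the two variance bounds; the one genuinely external input is the $L^2$ martingale convergence theorem, which is what upgrades the finite-variance bound on $Y_n$ into almost sure convergence of the weighted series. I expect the main subtlety to be bookkeeping with the filtration at the initial index (the case $n=1$, where conditioning is on the trivial $\sigma$-algebra, so the hypothesis forces $\mathbf{E}(X_1)=0$) and making sure the passage from almost sure convergence of $\sum_i X_i/i$ to Ces\`aro convergence of $\sum_i X_i$ via Kronecker's lemma is invoked on the correct full-measure event.
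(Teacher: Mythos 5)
Your proof is correct, but there is nothing in the paper to compare it against: the paper does not prove this theorem at all. It quotes it as a known result, citing \cite[Corollary A.8]{BYQJF}, and uses it as a black box in the proof of Theorem~\ref{thm_muHP}. What you have written is the standard martingale strong law of large numbers, and every step is sound: the hypothesis $\mathbf{E}(X_n\mid X_1,\dots,X_{n-1})=0$ makes $S_n=\sum_{i\le n}X_i$ a martingale for the natural filtration; orthogonality of the increments (legitimate, since $X_mX_n$ is integrable by Cauchy--Schwarz) gives $\mathbf{E}(S_n^2)\le nM$ and hence the $L^2$ statement; the weighted sums $Y_n=\sum_{i\le n}X_i/i$ form a martingale with $\sup_n\mathbf{E}(Y_n^2)\le M\sum_{i\ge 1}i^{-2}<\infty$, so Doob's convergence theorem gives almost sure convergence of the series $\sum_i X_i/i$, and Kronecker's lemma (applied pathwise on the full-measure event where the series converges) yields $S_n/n\to 0$ almost surely. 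Your argument therefore supplies a self-contained proof of what the paper delegates to the literature. The trade-off is that your route still rests on two external inputs, Doob's $L^1$-bounded martingale convergence theorem and Kronecker's lemma, whereas the $L^2$ half of the claim needs only the elementary variance computation; this is exactly the structure of the textbook proof, so in effect you have reconstructed the content of the cited result rather than found a genuinely different path to it.
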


To estimate the Hausdorff dimension, we need the following well-known fact, which is often called Frostman’s Lemma, see~\cite{Falco97}.
\begin{lem}\label{lem_frost}
 Let $\mu$ be a finite Borel measure in $\R^d$.\\
(1) If $\liminf_{r\to 0} \frac{\log \mu(B(x,r)}{\log r} \geq s$ for $\mu$-almost every $x$, then $\hdd \mu\geq s.$\\
(2) If $\liminf_{r\to 0} \frac{\log \mu(B(x,r)}{\log r} \leq s$ for $\mu$-almost every $x$, then $\hdd  \mu\leq s.$\\
(3) If $\limsup_{r\to 0} \frac{\log \mu(B(x,r)}{\log r} \geq s$ for $\mu$-almost every $x$, then $\pkd  \mu\geq s.$\\
(4) If $\limsup_{r\to 0} \frac{\log \mu(B(x,r)}{\log r} \leq s$ for $\mu$-almost every $x$, then $\pkd  \mu\leq s.$
\end{lem}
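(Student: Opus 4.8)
The plan is to derive all four assertions from two pairs of pointwise \emph{comparison statements} that relate the local behaviour of $\mu$ on a set to the Hausdorff (respectively packing) dimension of that set, and then to feed in the appropriate $\mu$-full-measure level set. Concretely, I would first isolate the following classical Billingsley/Taylor--Tricot density facts for a Borel set $A\subseteq\R^d$ (note that $x\mapsto\lld\mu(x)$ and $x\mapsto\uld\mu(x)$ are Borel, so all the level sets below are measurable): (I-a) if $\mu(A)>0$ and $\lld\mu(x)\geq s$ for every $x\in A$, then $\hdd A\geq s$; (I-b) if $\lld\mu(x)\leq s$ for every $x\in A$, then $\hdd A\leq s$; (II-a) if $\mu(A)>0$ and $\uld\mu(x)\geq s$ for every $x\in A$, then $\pkd A\geq s$; (II-b) if $\uld\mu(x)\leq s$ for every $x\in A$, then $\pkd A\leq s$.

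Granting these, every part of Lemma~\ref{lem_frost} is immediate from the definitions $\hdd\mu=\inf\{\hdd A:\mu(A^c)=0\}$ and $\pkd\mu=\inf\{\pkd A:\mu(A^c)=0\}$. For~(1), given any $A$ with $\mu(A^c)=0$, the set $A\cap\{\lld\mu\geq s\}$ still carries full measure, so (I-a) gives $\hdd A\geq\hdd\bigl(A\cap\{\lld\mu\geq s\}\bigr)\geq s$, and taking the infimum over such $A$ yields $\hdd\mu\geq s$. For~(2), the set $A=\{\lld\mu\leq s\}$ satisfies $\mu(A^c)=0$, and (I-b) gives $\hdd A\leq s$, whence $\hdd\mu\leq s$. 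Parts~(3) and~(4) follow identically from (II-a) and (II-b). Thus the entire content sits in the four comparison facts.

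For the Hausdorff facts I would argue as follows. For (I-a), fix $t<s$ and stratify $A=\bigcup_N A_N$ with $A_N=\{x\in A:\mu(B(x,r))\leq r^t\text{ for all }0<r\leq 1/N\}$; since $\lld\mu>t$ on $A$ some $A_N$ has positive measure, and for any cover of $A_N$ by sets $U_i$ of diameter $\leq 1/N$ the mass distribution principle gives $\sum_i(\operatorname{diam}U_i)^t\geq\sum_i\mu(A_N\cap U_i)\geq\mu(A_N)>0$, so $\mathcal{H}^t(A_N)>0$ and $\hdd A\geq t$; letting $t\uparrow s$ finishes it. For (I-b), fix $t>s$; on $A$ one has $\mu(B(x,r))\geq r^t$ for arbitrarily small $r$, and pruning the resulting fine cover by the $5r$-covering lemma to a disjoint subfamily $\{B(x_i,r_i)\}$ with $A\subseteq\bigcup_i B(x_i,5r_i)$ gives $\sum_i(10r_i)^t\leq 10^t\sum_i\mu(B(x_i,r_i))\leq 10^t\mu(\R^d)$, so $\mathcal{H}^t(A)<\infty$ and $\hdd A\leq t\downarrow s$.

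The packing facts run along parallel lines but with the packing premeasure $\mathcal{P}^t_0$ in place of the covering functional, and this is where I expect the real work to lie. For (II-b), fix $t>s$ and stratify $A=\bigcup_N A_N$ so that $\mu(B(x,r))\geq r^t$ for all $r\leq 1/N$ on $A_N$; any disjoint $\delta$-packing of $A_N$ then satisfies $\sum_i(2r_i)^t\leq 2^t\sum_i\mu(B(x_i,r_i))\leq 2^t\mu(\R^d)$, so $\mathcal{P}^t_0(A_N)<\infty$, which forces $\pkd A_N\leq t$, and by countable stability $\pkd A=\sup_N\pkd A_N\leq t\downarrow s$. For (II-a), fix $t<s$; here $\mu(B(x,r))\leq r^t$ for arbitrarily small $r$ at each point of $A$, so for any Borel $B\subseteq A$ the Vitali covering theorem for the Radon measure $\mu$ produces a disjoint family of such balls covering $\mu$-almost all of $B$, giving $\mathcal{P}^t_0(B)\geq 2^t\sum_i\mu(B(x_i,r_i))\geq\mu(B)$; applied to every piece of an arbitrary countable decomposition $A=\bigcup_i A_i$ this yields $\sum_i\mathcal{P}^t_0(A_i)\geq\sum_i\mu(A_i)\geq\mu(A)>0$, hence $\mathcal{P}^t(A)\geq\mu(A)>0$ and $\pkd A\geq t\uparrow s$. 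The delicate points, which I would treat most carefully, are precisely these packing arguments: reconciling the premeasure $\mathcal{P}^t_0$ with the regularized packing measure $\mathcal{P}^t$ (so that finiteness of the premeasure on each stratum genuinely bounds the dimension above, and positivity on every positive-measure piece genuinely bounds it below through the infimum defining $\mathcal{P}^t$), and invoking the correct form of the Vitali covering theorem for $\mu$ so that the good balls may be chosen disjoint while still capturing the full mass.
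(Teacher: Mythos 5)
The paper offers no proof of this lemma: it is quoted as a known result (``Frostman's Lemma'') with a pointer to Falconer's \emph{Techniques in Fractal Geometry} \cite{Falco97}, so there is no internal argument to compare yours against. Your proposal is a correct reconstruction of the standard proof in that reference. The reduction of all four parts to the Billingsley-type comparison facts (I-a)--(II-b), using the definitions $\hdd\mu=\inf\{\hdd A:\mu(A^c)=0\}$ and $\pkd\mu=\inf\{\pkd A:\mu(A^c)=0\}$ together with the full-measure level sets of $\lld\mu$ and $\uld\mu$, is exactly how the textbook treatment proceeds, and your four density arguments are the standard ones: the mass distribution principle on the strata $A_N$ for (I-a), the $5r$-covering lemma for (I-b), finiteness of the packing premeasure on strata plus countable stability of $\pkd$ for (II-b), and the Besicovitch--Vitali covering theorem for Radon measures for (II-a). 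Two small points deserve care but do not invalidate anything: in (II-a) the pieces $A_i$ of an arbitrary countable decomposition need not be Borel, so the inequality $\mathcal{P}^t_0(A_i)\geq\mu(A_i)$ should be phrased with outer measure (the Besicovitch--Vitali theorem does apply to arbitrary sets of finite outer measure); and in (I-a)/(II-b) the strata $A_N$ should either be checked to be measurable or handled with outer measure, since only positivity of outer measure on some $A_N$ is actually needed for the mass distribution estimate. With those readings, your proof is complete and is, in substance, the proof the paper implicitly relies on by citation.
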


\begin{proof}[Proof of Theorem~\ref{thm_muHP}]
First, we show that the conclusion holds for BSC. Given $\bw=w_1w_2\ldots w_k\ldots \in \Sigma^\infty$, since $(\log p_k(w_k))_{k\in \mathbb{N}}$ is a sequence of independent random variables, their variances are uniformly bounded by
$$
\mathbf{Var} (\log p_k(w_k)) \leq (N^+)^2 \max_{x\in [0.1]} x\log^2 x.
$$
By Theorem~\ref{lln}, we have
$$
-\sum_{k=1}^N \log p_k(w_k) = \sum_{k=1}^N \sum_{w\in\D_k} p_k(w)\log p_k(w)+o(N),
$$
almost surely.
Similarly, the following equalities hold
$$
-\sum_{k=1}^N \log q_k(w_k) = \sum_{k=1}^N \sum_{w\in\D_k} p_k(w)\log q_k(w)+o(N);
$$
$$
-\sum_{k=1}^N \log \widehat{q}_k(w_k) = \sum_{k=1}^N \sum_{w\in\D_k} p_k(w)\log \widehat{q}_k(w)+o(N),
$$
almost surely.

For each integer $k>0$, set $\delta=(m_1\ldots m_k)^{-1}$, and write $U(k,\bw)=U(\delta,\bw)$ and $S_k(\bw)=\Pi(U(k,\bw))$. By \eqref{nuas}, we have that for  $k\geq l$,
\begin{eqnarray*}
\log \nu_\bp(U(k,\bw))&=& \sum_{i=1}^l p_i(w_i)+ \sum_{i=l+1}^k q_i(w_i)\\
&=& \sum_{i=1}^l \sum_{w\in\D_k} p_k(w)\log p_k(w) + \sum_{i=l+1}^k \sum_{w\in\D_k} p_k(w)\log q_k(w) + o(k),
\end{eqnarray*}
and for $k<l$,
\begin{eqnarray*}
\log \nu_\bp(U(k,\bw))&=& \sum_{i=1}^k p_i(w_i)+ \sum_{i=k+1}^l q_i(w_i)\\
&=& \sum_{i=1}^k\sum_{w\in\D_k}  p_k(w)\log p_k(w) + \sum_{i=k+1}^l\sum_{w\in\D_k}  p_k(w)\log \widehat{q}_k(w) + o(k),
\end{eqnarray*}
almost surely. Hence, by \eqref{muas} and \eqref{def_Hp}, it follows that
\begin{equation}\label{mu_HO}
\log \mu_\bp(S_k(\bw)) = \log \nu_\bp(U(k,\bw)) = H_k(\bp)+o(k),
\end{equation}
almost surely.

Fix $\varepsilon>0$, and let $\xi=\frac{2\varepsilon}{1-\varepsilon}$. It is clear that  $\xi\rightarrow0$ as $\varepsilon$ tends to $0$. Since $E$ satisfies the separation condition,  there exists $K_0>0$ such that for $k>K_0$,
\begin{eqnarray}\label{Dh}
&&\card\{h:\D_h \textit{ is left empty for  } (1-\xi)k<h<k\}\geq 1\\
&&\card\{h:\D_h \textit{ is right empty for  } (1-\xi)k<h<k\}\geq 1 \nonumber\\
&&\card\{h:\D_h \textit{ is top empty for  } (1-\xi)k<h<k\}\geq 1 \nonumber\\
&&\card\{h:\D_h \textit{ is bottom empty for  } (1-\xi)k<h<k\}\geq1 \nonumber
\end{eqnarray}

For sufficiently small $\rho>0$, let $k$ be the integer such that
$$
\prod_{i=1}^k m_i \leq \rho < \prod_{i=1}^{k-1} m_i \leq (N^+)^{-1} \prod_{i=1}^k m_i.
$$

Let $l=l(k)$ be given by \eqref{def_l}. Setting
$$
k'=k+1,\quad k''=\min\{(1-\xi)k,k((1-\xi)^2 l)\},
$$
where $k((1-\xi)^2 l)$ denotes the largest integer $\beta$ such that $l(\beta) \leq (1-\xi)^2 l$. Then, by \eqref{Dh}, we have that
\begin{eqnarray*}
&&\card\{h:\D_h \textit{ is left for  } l''(k'')<h<(1-\xi)l\}\geq 1, \\
&&\card\{h:\D_h \textit{ is right for  } l''(k'')<h<(1-\xi)l\}\geq 1,\\
&&\card\{h:\D_h \textit{ is top empty for  } k''<h<k\}\geq 1, \\
&&\card\{h:\D_h \textit{ is bottom empty for  } k''<h<k\}\geq 1.
\end{eqnarray*}

Next we show that the distance from $\Pi(\bw)$ to the each side of $S_{k''}(\bw)$ is greater than $\rho$. We first  consider the distance from $\Pi(\bw)$ to the left side of $S_{k''}(\bw)$. Let $l_0$ an integer satisfy $l''(k'')<l_0<(1-\xi)l$. Then the distance from $\Pi(\bw)$ to the left side of $S_{k''}(\bw)$ is no less than $(n_1\ldots n_{l_0})^{-1}$. Since $l$ is sufficiently large, $\xi l \geq \frac{\log (N^+)^2}{\log 2}$, It is clear that
$$
(n_1\ldots n_{l_0})^{-1} \geq 2^{\xi l} (n_1 \ldots n_l)^{-1} \geq (N^+)^2 (n_1 \ldots n_l)^{-1} \geq \rho.
$$
Hence  the distance from $\Pi(\bw)$ to the left side of $S_{k''}(\bw)$ is greater than $\rho$. For  the distance from $\Pi(\bw)$ to the bottom side of $S_{k''}(\bw)$. similarly, we may find an integer $k_0$, $k''<k_0<k$ such that $\D_{k_0}$ is bottom empty. Then the distance from $\Pi(\bw)$ to the bottom side of $S_{k''}(\bw)$ is no less than $(m_1\ldots m_{k_0})^{-1}$, which is greater than $\rho$.

Similarly, the distances from $\Pi(\bw)$ to the top and right sides of $S_{k''}(\bw)$ are greater than $\rho$ as well. This implies that $
B(\Pi(\bw),\rho) \subset S_{k''}(\bw).$  Since $(m_1 \ldots m_{k'})^{-1} < (m_1 \ldots m_{k})^{-1} \leq \rho$, we have $S_{k'}(\bw) \subset B(\Pi(\bw),\rho).$

Therefore, we obtain that
\begin{equation}\label{incBS}
S_{k'}(\bw) \subset B(\Pi(\bw),\rho) \subset S_{k''}(\bw).
\end{equation}
By \eqref{mu_HO}, immediately, we have that
\begin{eqnarray*}
\liminf_{k\to \infty} \frac{H_{k'}(\bp)+o(k)}{\sum_{i=1}^{k} \log m_i} &\geq& \liminf_{\rho\to 0} \frac{\log \mu_\bp(B(\Pi(\bw),\rho))}{\log \rho}
\geq \liminf_{k\to \infty} \frac{H_{k''}(\bp)+o(k)}{\sum_{i=1}^{k} \log m_i}
\end{eqnarray*}
almost surely.  Note that
$$
H_{k'}(\bp)\to H_{k}(\bp) , \qquad H_{k''}(\bp)\to H_{k}(\bp)
$$
as $\varepsilon$ tends to  $0$, they imply that
$$
\liminf_{\rho\to 0} \frac{\log \mu_\bp(B(\Pi(\bw),\rho))}{\log \rho}=\liminf_{k \to \infty} \frac{H_k(\bp)}{\sum_{i=1}^{k} \log m_i}.
$$
almost surely. By Lemma~\ref{lem_frost},  it follows that
$$
\dimh \mu_\bp =\liminf_{k \to \infty} \frac{H_k(\bp)}{\sum_{i=1}^{k} \log m_i}.
$$
Similarly, By~\eqref{incBS} and \eqref{mu_HO}, we have that
\begin{eqnarray*}
\limsup_{k\to \infty} \frac{H_{k'}(\bp)+o(k)}{\sum_{i=1}^{k} \log m_i} &\geq& \limsup_{\rho\to 0} \frac{\log \mu_\bp(B(\Pi(\bw),\rho))}{\log \rho}
\geq \limsup_{k\to \infty} \frac{H_{k''}(\bp)+o(k)}{\sum_{i=1}^{k} \log m_i}.
\end{eqnarray*}

By Lemma~\ref{lem_frost},  we have that
$$
\pkd \mu_\bp =\limsup_{k \to \infty} \frac{H_k(\bp)}{\sum_{i=1}^{k} \log m_i}.
$$

Next we prove that the conclusion holds for MSC.  For each integer $k>0$, we write
$$
A_k=\{x=\Pi(\mathbf{w})\in E: B\big(x,(m_1\ldots m_k)^{-1}e^{-\sqrt{k}}\big)\cap E \subset S_k(\mathbf{w})\}.
$$
Let $L_k$ be the collection of $x\in E$ such that the distance from $x$ to the bottom side of $S_k(\mathbf{w})$ is less than $(m_1\ldots m_k)^{-1}e^{-\sqrt{k}}$, where $x=\Pi(\mathbf{w})$, $\mathbf{w}=(i_1,j_1)\ldots(i_k,j_k)\ldots\in\Sigma^\infty$. It is clear that  $j_{k+1}= \ldots =j_{k+[\sqrt{k}/\log N^+]}=0$. Hence,  the measure of $L_k$ is  bounded by
$$
\mu_\bp(L_k)\leq q_{k+1}(0)\ldots q_{{k+[\sqrt{k}/\log N^+]}}(0) \leq C_1^{\sqrt{k}},
$$
where $C_1=C^{1/(\log N^+ +1)}<1$. We apply the similar argument to other three sides and obtain that
$$
\sum_{k=1}^\infty \mu_\bp(A_k^c) < 4 \sum_{k=1}^\infty C_1^{\sqrt{k}}<\infty.
$$
By Borel-Cantelli Lemma, it follows that
$$\mu_\bp(A_k^c\,i.o.)=0.
$$
Therefore, for $\mu_\bp$-almost all $x$, we have that
$$
\mu_\bp\Big(B\big(x, (m_1\ldots m_k)^{-1}e^{-\sqrt{k}}\big)\Big)\leq \mu_\bp(S_k(\mathbf{w})),
$$
for sufficiently large $k$.  For each $\rho>0$, there exists a unique integer $k$ such that
$$
(m_1\ldots m_{k+1})^{-1}e^{-\sqrt{k+1}}\leq \rho<(m_1\ldots m_k)^{-1}e^{-\sqrt{k}},
$$
which implies that $\mu_\bp(B(x,\rho))\leq \mu_\bp(S_k(x))$. Therefore, we have that
$$
\liminf_{\rho\to 0}\frac{\log \mu_\bp(B(x,\rho))}{\log \rho}\geq \liminf_{k\to \infty}\frac{\log \mu_\bp(S_k(\mathbf{w}))}{-\sum_{i=1}^{k} \log m_i} =\liminf_{k\to \infty}\frac{H_k(\bp)}{\sum_{i=1}^{k} \log m_i}.
$$

On the other hand, for each $\rho>0$, let $k$ be the integer such that
$$
\prod_{i=1}^k m_i < \rho \leq \prod_{i=1}^{k-1} m_i.
$$
Then for all $x\in E$, choose $\bw\in \Pi^{-1}(x)$, and we have $S_k(\bw)\subset B(x,\rho)$, which implies $\mu_\bp(B(x,\rho))\geq \mu_\bp(S_k(x))$. Therefore, by~\eqref{mu_HO}, we have that
$$
\liminf_{\rho\to 0}\frac{\log \mu_\bp(B(x,\rho))}{\log \rho}\leq \liminf_{k\to \infty}\frac{\log \mu_\bp(S_k(\mathbf{w}))}{-\sum_{i=1}^{k} \log m_i} =\liminf_{k\to \infty}\frac{H_k(\bp)}{\sum_{i=1}^{k} \log m_i}.
$$
It follows that
$$
\liminf_{\rho\to 0} \frac{\log \mu_\bp(B(x,\rho))}{\log \rho}=\liminf_{k \to \infty} \frac{H_k(\bp)}{\sum_{i=1}^{k} \log m_i}.
$$
almost surely. By Lemma~\ref{lem_frost},  the Hausdorff dimension of $\mu_\bp$ is given by
$$
\dimh \mu_\bp =\liminf_{k \to \infty} \frac{H_k(\bp)}{\sum_{i=1}^{k} \log m_i}.
$$

Similarly, for $\mu_\bp$-almost all $x$, we have that
$$
\limsup_{r\to 0}\frac{\log \mu_\bp(B(x,\rho))}{\log \rho}=\limsup_{k \to \infty} \frac{H_k(\bp)}{\sum_{i=1}^{k} \log m_i},
$$
and by Lemma~\ref{lem_frost},  the packing dimension of $\mu_\bp$ is given by
$$
\pkd \mu_\bp =\limsup_{k \to \infty} \frac{H_k(\bp)}{\sum_{i=1}^{k} \log m_i}.
$$
\end{proof}

\begin{proof}[proof of Corollary~\ref{cor_BSC}]
For each $k>0$, let $p_k(w)=\frac{1}{r_k}$ for all $w\in \D_k$. Since $r_k(j)=c_k$ for all $j$ such that $r_k(j)\neq 0$, we have $r_k=c_k s_k$, and it implies $q_k(w)=\frac{1}{s_k}$. By~\eqref{nuas},
\begin{eqnarray*}
\log \mu_\bp(S_k(\bw)) &=& \log \nu_\bp(U(k,\bw))\\
&=& \sum_{i=1}^{l} \log p_k(w) + \sum_{k=l+1}^{k} \log q_k(w)  \\
&=& -\sum_{i=1}^{l} \log r_k - \sum_{k=l+1}^{k} \log s_k
\end{eqnarray*}
for all $\bw\in \Sigma^\infty$ and $k>0$. By the same argument in Theorem~\ref{dim_murm}, we have that
$$\liminf_{\rho\to 0} \frac{\log \mu_\bp(B(x,\rho))}{\log \rho} = \liminf_{k\to \infty} \frac{\sum_{i=1}^{l} \log r_k + \sum_{k=l+1}^{k} \log s_k}{\sum_{i=1}^{k} \log m_i}.
$$
for all $x\in E$. Then by ~\cite[Proposition 2.3]{Falco97},
$$
\dimh E = \dimh \mu_\bp = \liminf_{k\to \infty} \frac{\sum_{i=1}^{l} \log r_k + \sum_{k=l+1}^{k} \log s_k}{\sum_{i=1}^{k} \log m_i}.
$$
Since $\dimh E = \sup\{\dimh \mu; \text{ for all Borel } \mu \text{ on } E \text{ such that } 0<\mu(E)<\infty\}$, we have that
$$
\hdd \mu_\bp=\max\{\hdd \mu_{\bp'} : \bp'\in \mathcal{P}\}=\hdd E.
$$
Similarly, we have that
$$
\pkd E  = \pkd \mu_\bp = \limsup_{k\to \infty} \frac{\sum_{i=1}^{l} \log r_k + \sum_{k=l+1}^{k} \log s_k}{\sum_{i=1}^{k} \log m_i},
$$
and this implies that
$$
\pkd \mu_\bp=\max\{\pkd \mu_{\bp'} : \bp'\in \mathcal{P}\}=\pkd E.
$$
\end{proof}

\begin{proof}[proof of Theorem~\ref{dim_murm}]
Since $E$ satisfies the frequency separation condition, there exists $c>0$ such that
$$
\lim_{n\to\infty} \frac{\card\{k:\D_k \textit{ is centred for }  k=1,\ldots,n\}}{n} = c.
$$
It is clear that If $\D_k$ is centred, then $\D_k$ is left, right, top and bottom separated and  $c_L=c_R=c_T=c_B=c$, that is, FSC implies BSC. Hence $E$ satisfies boundary separation condition, and by Theorem~\ref{thm_muHP},  the conclusion holds
\end{proof}
\begin{proof}[proof of  Corollary~\ref{cor_dim}]
Since FSC implies BSC, by Corollary~\ref{cor_BSC}, the conclusion holds
\end{proof}

\section{Lower and Assouad dimensions of self-affine Moran sets}\label{sec_Ld}
In  this section, we give the proofs for the lower and Assouad dimension of self-affine Moran sets.

First, we show the connection between approximate squares and balls which is fundamental to our proofs.  For simplicity, we write
$$R_k=(m_1 \ldots m_k)^{-1}.$$
\begin{lem}
For every approximate square $S\in \mathcal{S}_k$, there exists $x\in S$ such that $B(x,(N^+)^{-3}R_k)\cap E \subset S$.
\end{lem}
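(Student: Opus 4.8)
The plan is to realise $S$ as the trace of $E$ on one box of the natural scale-$R_k$ grid and then to choose the centre $x$ so that the ball of radius $\rho:=(N^+)^{-3}R_k$ cannot reach the part of $E$ carried by any neighbouring box.

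\medskip
\emph{Reduction.} I would write $S=\Pi(U)$ with $U=U(R_k,\bw)$ and $l=l(k)$. Prescribing the horizontal digits $i_1,\dots,i_l$ and the vertical digits $j_1,\dots,j_k$ of $\bw$ cuts out a rectangle $Q=I_1\times I_2$ with $|I_1|=W=(n_1\cdots n_l)^{-1}$ and $|I_2|=H=R_k$; by the definition of $l(k)$ one has $R_k/N^+<W\le H=R_k$. As $\bw$ varies, these boxes run over the product of the level-$l$ grid in the first coordinate with the level-$k$ grid in the second, so they tile $[0,1]^2$ and partition $E$ up to the grid lines, with $S=Q\cap E$ off those lines. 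Hence it suffices to find $x\in E\cap Q$ whose distance to $E\cap Q'$ exceeds $\rho$ for each of the at most eight boxes $Q'$ adjacent to $Q$ (non-adjacent boxes are automatically more than $\min\{W,H\}=W>\rho$ away); then $B(x,\rho)\cap E\subseteq Q\cap E=S$.

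\medskip
\emph{Translation invariance.} Since the maps at level $n$ depend only on $n$, all boxes sharing the vertical address $j_1\cdots j_k$ carry congruent copies of $E$: the first coordinates of $E\cap Q$ fill out $a_1+W\widehat G$, where $\widehat G\subseteq[0,1]$ is determined by the admissible columns at levels $>l$ (with $j_{l+1},\dots,j_k$ prescribed), the \emph{same} set for every horizontal neighbour, and likewise the second coordinates fill $a_2+HG$ with $G$ depending only on the admissible rows at levels $>k$. Thus a horizontal neighbour contributes first coordinates in $a_1+W(\widehat G\pm 1)$ and a vertical neighbour contributes second coordinates in $a_2+H(G\pm 1)$. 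A diagonal box differs in both addresses, so it is ruled out as soon as $x$ is separated from every box of a different vertical address; this lets me treat the two coordinate directions separately.

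\medskip
\emph{Choice of $x$ and estimate.} In the vertical direction I would take the first free row $j_{k+1}$ to be an admissible interior row $1\le j_{k+1}\le m_{k+1}-2$ whenever one exists: the chosen sub-cell then lies at distance $\ge H/m_{k+1}\ge R_k/N^+>\rho$ from both horizontal edges of $Q$, and since $a_2+H(G\pm1)$ lies beyond those edges this bound holds for \emph{every} completion of the address. If no interior row exists, then either a single admissible row confines $E\cap Q$ to one band, translation-invariantly aligned with the neighbours' bands and so leaving a gap $\ge H(1-1/m_{k+1})\ge H/2$, or the admissible rows are exactly $\{0,m_{k+1}-1\}$, in which case writing the second coordinate of $x$ and of the nearest neighbour point in the common coordinate of $G$ makes the shared continuation cancel and leaves a gap $\ge H/m_{k+1}$. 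The horizontal direction is identical with $W$ in place of $H$, giving margin $\ge W/n_{l+1}\ge R_k/(N^+)^2>\rho$; the extra factor $N^+$ lost here (because $W$ may be $N^+$ times smaller than $R_k$) is precisely why the radius is $(N^+)^{-3}R_k$.

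\medskip
\emph{Main obstacle.} The delicate point is that the horizontal and vertical addresses cannot be prescribed independently, because each pair $(i_n,j_n)$ must lie in the arbitrary set $\D_n$; the coupling is total when $l=k$, where both directions first become free at the same level $k+1$, and it is worst when $n_n=m_n=2$, so that no interior digit is available at one level. The resolution is that the estimates above never require controlling the tail: once an interior digit is secured in one direction that direction is settled for all continuations, freeing the coupled tail digits to serve the other direction; and in the remaining extreme-only configurations translation invariance pushes the competing copies of $E$ to the far edges of the neighbouring boxes—so that, as the Cantor-type diagonal examples show, the apparently dangerous diagonal box is in fact empty—whence the shared continuation cancels and the bare one-level gap survives. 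Checking that this residual stays within the $(N^+)^{-3}R_k$ budget across the three regimes $l<k$, $l=k$, $l>k$ is the only genuine computation.
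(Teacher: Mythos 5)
Your overall framework---reducing to the eight neighbouring grid boxes, splitting the two coordinate directions via translation invariance, and the case analysis ``interior digit / single extreme digit / both extreme digits''---is sound, and the first two cases are handled correctly. The genuine gap sits exactly where you place the ``main obstacle,'' and the resolution you offer there is wrong. In the fully coupled situation ($l=k$, so the first free level is the same for both coordinates) with both extreme digits admissible in each direction, securing the vertical gap by your prescription ($j_{k+1}=0$ and the vertical tail maximal) consumes the \emph{entire} address: at every level the horizontal digit is then forced to whatever columns the chosen rows allow, so there may be no freedom left ``to serve the other direction.'' Moreover the claim that in such configurations ``the apparently dangerous diagonal box is in fact empty'' is false: the dangerous competitor need not be diagonal and need not be empty.

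Concretely, take $n_i=m_i=2$ for all $i$ (so $N^+=2$ and $l(k)=k$), let $\D_{k+1}=\{(0,0),(1,1)\}$ and $\D_n=\{(0,1),(1,0)\}$ for all $n\ge k+2$. There are no interior digits, and both rows and both columns occur at level $k+1$, so both directions are in your exactly-two-extremes case. Your prescription forces $w_{k+1}=(0,0)$ (row $0$ pairs only with column $0$) and $w_n=(0,1)$ for $n\ge k+2$ (the maximal row pairs only with column $0$), i.e.\ $x$ is the midpoint of the \emph{left edge} of the box; but the left neighbour's approximate square contains exactly this point, via the address $(1,1),(1,0),(1,0),\dots$, so $B(x,\rho)\cap E\not\subset S$ for every $\rho>0$ (the symmetric choice $j_{k+1}=1$ with minimal tail fails the same way on the right edge). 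The lemma is still true here: the point with continuation $(0,0),(1,0),(0,1),(0,1),\dots$, at relative position $(1/4,1/4)$ in the box, works---but one finds it by \emph{mixing} the tail, and its safety is a genuinely two-dimensional effect (the left and lower neighbours do have points reaching the shared edges, but those points are displaced in the other coordinate), which a coordinate-by-coordinate greedy scheme cannot certify. The paper's proof runs on a different mechanism: a dichotomy on the symbolic cylinder $U$. Either some admissible continuation is non-extreme in both coordinates within the first two free levels, and that point is automatically $(N^+)^{-3}R_k$-deep inside $S$; or every continuation is extreme in some coordinate, and an exchange argument (altering a single digit of an admissible word) shows that entire extreme rows or columns of $\D_{l+1}$ or $\D_{k+1}$ must be absent, so the would-be dangerous points of the neighbouring squares simply do not exist. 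That exchange step is itself most delicate precisely in the coupled case $l=k$ of the example above, but the principle it uses---structural absence of competitors, rather than a prescribed tail---is what a correct proof needs, and your proposal does not supply a substitute for it.
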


\begin{proof}
Suppose that $S=\Pi(U)$ is an approximate square in $\mathcal{S}_k$ with $l\leq k$(for the case $l>k$, the conclusion follows by exchanging the roles of x and y axes), where
$$
U=\Big\{\mathbf{w}=w_1 w_2 \ldots w_n \ldots:\begin{array}{ll}
    i_n=i_n(S),&n=1,\ldots,l, \\
    j_n=j_n(S),&n=1,\ldots, k,
\end{array}
w_n=(i_n,j_n)\Big\}.
$$
Define
\begin{multline*}
\partial U=\Big\{\mathbf{w}=w_1 w_2 \ldots w_n \ldots:
i_{l+1}=i_{l+2}=0 \text{ or } i_{l+1}=n_{l+1}-1,i_{l+2}=n_{l+2}-1, \\
j_{k+1}=j_{k+2}=0 \text{ or } j_{k+1}=m_{k+1}-1,j_{k+2}=m_{k+2}-1,w_n=(i_n,j_n)\Big\}.
\end{multline*}
If there exists $\mathbf{w}\in U$ such that $\mathbf{w}\notin \partial U$, then by taking $x=\Pi(\mathbf{w})$, we have that $B(x,(N^+)^{-3}R_k)\cap E \subset S$. Otherwise $U\backslash\partial U$ is empty, then $\partial U$ is nonempty.

First suppose that there exists $\mathbf{w}\in \partial U$ with $i_{l+1}=i_{l+2}=0$(or $i_{l+1}=n_{l+1}-1,i_{l+2}=n_{l+2}-1$) and $j_{k+1}\notin \{0,m_{k+1}-1\}$ or $j_{k+2}/(m_{k+2}-1)\neq j_{k+1}/(m_{k+1}-1)$. It follows that if $k\geq l+1$ then $(n_{l+1}-1, j_{l+1}(S))\notin \D_{l+1}$(or $(0, j_{l+1}(S))\notin \D_{l+1}$), and if $k=l$ then $ \widehat{r}_l(n_{l+1}-1)=0$(or $\widehat{r}_l(0)=0$). Arbitrarily choose such $\mathbf{w}$ and let $x=\Pi(\mathbf{w})$, then $B(x,(N^+)^{-3}R_k)\cap E \subset S$ and the conclusion holds.

Otherwise, each $\mathbf{w}=w_1 w_2 \ldots w_n \ldots\in U, w_n=(i_n,j_n)$ satisfies $j_{k+1}=j_{k+2}=0$ or $j_{k+1}=m_{k+1}-1,j_{k+2}=m_{k+2}-1$. Without loss of generality, suppose that there exists $\mathbf{w}\in U$ with $j_{k+1}=j_{k+2}=0$. It follows that $r_k(m_{k+1}-1)=0$.

If there exists $\mathbf{w}\in U$ with $j_{k+1}=j_{k+2}=0$ and $i_{l+1}\notin \{0,n_{l+1}-1\}$ or $i_{l+2}/(n_{l+2}-1)\neq i_{l+1}/(n_{l+1}-1)$, then by taking such $\mathbf{w}$ and $x=\Pi(\mathbf{w})$, we have that $B(x,(N^+)^{-3}R_k)\cap E \subset S$ and the conclusion holds. Otherwise, for each $\mathbf{w}\in U$, $i_{l+1}=i_{l+2}=0$(or $i_{l+1}=n_{l+1}-1,i_{l+2}=n_{l+2}-1$) and $j_{k+1}=j_{k+2}=0$. It follows that if $k\geq l+1$ then $(n_{l+1}-1, j_{l+1}(S))\notin \D_{l+1}$(or $(0, j_{l+1}(S))\notin \D_{l+1}$), and if $k=l$ then $ \widehat{r}_l(n_{l+1}-1)=0$(or $\widehat{r}_l(0)=0$). Arbitrarily choose $\mathbf{w}$ and let $x=\Pi(\mathbf{w})$, then $B(x,(N^+)^{-3}R_k)\cap E \subset S$ and the conclusion holds.
\end{proof}

The following three lemmas are the key ingredients for the proof of lower dimensions. For each integer $k>0$, we write $\mathcal{S}_{k}=\mathcal{S}_{\delta}$ and $\mathcal{U}_k=\mathcal{U}_\delta$ for $\delta=(m_1 \ldots m_k)^{-1}$. For all integers $k'>k>0$, we write
\begin{eqnarray*}
&&\Gamma_{k,k'}^-(E)=\min_{S\in\mathcal{S}_{k}} \Gamma_{k,k'}(S), \qquad \Gamma_{k,k'}^+(E)=\max_{S\in\mathcal{S}_{k}} \operatorname{card}\{S'\in \mathcal{S}_{k'} : S'\subset S\}, \\
 &&  \Gamma_{k,k'}(S)=\operatorname{card}\{S'\in \mathcal{S}_{k'} : S'\subset S\}.
\end{eqnarray*}
Next lemma shows that $\Gamma_{k,k'}^-(E)$ is bounded by the number $ N_{k,k'}^-(E)$.
\begin{lem}\label{ld_cover}
For all integers $k'>k\geq 1$, let $N_{k,k'}^-(E)$ be given by\eqref{def_Nkk-}. Then
$$
\Gamma_{k,k'}^-(E) = N_{k,k'}^-(E).
$$
\end{lem}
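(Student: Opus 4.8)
The plan is to reduce the geometric count to a purely combinatorial one and then read off the answer level by level. Recall that a level-$k$ approximate square $S=\Pi(U)\in\mathcal{S}_{k}$ is determined by fixing the coordinates $i_1,\dots,i_l$ and $j_1,\dots,j_k$ of the digits $w_h=(i_h,j_h)$, where $l=l(k)$, whereas a level-$k'$ approximate square $S'=\Pi(U')\in\mathcal{S}_{k'}$ fixes $i_1,\dots,i_{l'}$ and $j_1,\dots,j_{k'}$ with $l'=l(k')$. Since $k'>k$ forces $l'\geq l$, the count $\card\{S'\in\mathcal{S}_{k'}:S'\subset S\}$ equals $\card\{U'\in\mathcal{U}_{k'}:U'\subset U\}$, and $U'\subset U$ holds precisely when the coordinates fixed by both $U$ and $U'$ coincide. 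Hence $\Gamma_{k,k'}(S)$ is exactly the number of admissible ways to assign the digit coordinates that become \emph{newly} fixed in passing from $S$ to $S'$.

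Because $\Sigma^{\infty}=\prod_h\D_h$ is a product, the assignments at distinct levels $h$ are independent, so $\Gamma_{k,k'}(S)$ factorises as a product over $h$ of a per-level factor. For each $h$ I would classify this factor according to the status (fixed or free) of $i_h$ and $j_h$ in $S$ and in $S'$: note $i_h$ is fixed in $S$ iff $h\leq l$ and in $S'$ iff $h\leq l'$, while $j_h$ is fixed in $S$ iff $h\leq k$ and in $S'$ iff $h\leq k'$. The resulting nontrivial factors are: $r_h$ when both coordinates pass from free to fixed (a free pair in $\D_h$); $s_h$ when only $j_h$ becomes fixed while $i_h$ stays free; $\widehat{s}_h$ when only $i_h$ becomes fixed while $j_h$ stays free; $r_h(j_h)$ when $i_h$ becomes fixed while $j_h$ was already fixed in $S$; $\widehat{r}_h(i_h)$ when $j_h$ becomes fixed while $i_h$ was already fixed; and $1$ when nothing is newly fixed at level $h$.

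Only the factors $r_h(j_h)$ and $\widehat{r}_h(i_h)$ depend on $S$, through an already-fixed coordinate. Since any admissible choice of fixed digits yields a genuine element of $\mathcal{S}_{k}$ and the levels decouple, minimising the product $\Gamma_{k,k'}(S)$ over $S\in\mathcal{S}_{k}$ amounts to minimising each of these factors separately, which replaces $r_h(j_h)$ by $r_h^-$ and $\widehat{r}_h(i_h)$ by $\widehat{r}_h^-$, every other factor being independent of $S$. It then remains to run through the six interleavings of $l,l',k,k'$ in \eqref{def_Nkk-} — governed by whether $l\leq k$ or $k\leq l$ and by where $k'$ and $l'$ fall — and, for each level $h$ in the relevant range, record which factor applies. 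For instance, in the case $l\leq k<l'\leq k'$ the levels $l+1,\dots,k$ contribute $r_h^-$, the levels $k+1,\dots,l'$ contribute $r_h$, and the levels $l'+1,\dots,k'$ contribute $s_h$, yielding exactly $N_{k,k'}^-(E)=r_{l+1}^-\cdots r_k^-\,r_{k+1}\cdots r_{l'}\,s_{l'+1}\cdots s_{k'}$; the other five cases are identical bookkeeping.

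The main obstacle is precisely this bookkeeping: one must track, across all six orderings and especially at the transition levels where a coordinate changes status, the correct state of each coordinate, and verify case by case that the product of per-level factors reproduces \eqref{def_Nkk-}. A secondary but essential point to justify cleanly is the reduction of the minimisation of the product to the independent minimisation of its factors; this rests on the product structure of $\Sigma^{\infty}$ and on the fact that the already-fixed coordinates of an approximate square may be selected freely among admissible values level by level, so that the minimising choices at different levels never conflict.
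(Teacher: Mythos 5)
Your proposal is correct and takes essentially the same approach as the paper's proof: both reduce $\Gamma_{k,k'}(S)$ to a level-by-level count of admissible digit choices, note that only the factors $r_h(j_h)$ (resp.\ $\widehat{r}_h(i_h)$) depend on $S$, bound these below by $r_h^-$ (resp.\ $\widehat{r}_h^-$), and attain the minimum by choosing a sequence $\mathbf{w}$ with $r_h(j_h)=r_h^-$ at every level. The only difference is organizational: the paper writes out three of the six orderings explicitly and gets the rest by exchanging the axes, whereas you run the same bookkeeping through a uniform per-level classification of factors.
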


\begin{proof}
Fix $k$ and $k'$, let $l=l(k)$ and $l'=l'(k')$ be given by~\eqref{def_l}.
For each $S(x)\in\mathcal{S}_{k}$ where $x\in S(x)\cap E$, there exists a unique $U(\mathbf{w})\in \mathcal{U}_k$, such that $S(x)=\Pi(U(\mathbf{w}))$ and $x=\Pi(\mathbf{w})$.
For each $S'(x')\in \mathcal{S}_{k'} $ such that $S'(x')\subset S(x)$, let $\mathbf{w}=w_1w_2\ldots w_n\ldots$ and $\mathbf{w'}=w_1'w_2'\ldots w_n'\ldots$ such that $\Pi(w)=x$ and $\Pi(w')=x'$, where $w_n=(i_n,j_n), w_n'=(i_n',j_n')\in \mathcal{D}_n$, and we have that
$$
\begin{array}{ll}
    i_n=i_n',&n=1,\ldots,l, \\
    j_n=j_n',&n=1,\ldots,k.
\end{array}
$$
Computing $\Gamma_{k,k'}(S(x))$ is equivalent to counting the number of $\mathbf{w'}$ such that $\Pi(\mathbf{w'})\in\mathcal{S}_{k'} $ satisfying above property. Therefore it is divided into six cases: $l < l'\leq k <k'$, $l \leq k < l' \leq k'$, $l \leq k < k' \leq l'$, $k \leq l < l' \leq k'$, $k \leq l < k' \leq l'$ and  $ k < k' \leq l < l'$. We only prove the first three cases, and the other three cases are the same by interchanging the directions.

\noindent (1) For $ l < l'\leq k <k' $.  We have that
\begin{eqnarray*}
\operatorname{card}\{i_n': (i_n',j_n)\in \mathcal{D}_n\}&=&r_n(j_n)\geq r_n^-, \qquad  \textit{  for } n=l+1,\ldots,l',
\\
\operatorname{card}\{j_n':(i_n',j_n')\in \mathcal{D}_n, \textit{ for some  }i_n'\}&=&s_n,   \qquad \qquad \qquad \ \textit{  for } n=k+1,\ldots,k'.
\end{eqnarray*}
Therefore,
\begin{eqnarray*}
  \Gamma_{k,k'}(S(x))&=&r_{l+1}(j_{l+1}) \ldots r_{l'}(j_{l'}) s_{k+1} \ldots s_{k'}   \\
    &\geq& r_{l+1}^- \ldots r_{l'}^- s_{k+1} \ldots s_{k'}\\
    &=&N_{k,k'}^-(E).
\end{eqnarray*}
Since it holds for all $x\in E$, we have that
$$
\Gamma_{k,k'}^-(E)\geq N_{k,k'}^-(E).
$$

On the other hand, we choose $\mathbf{w}=w_1 w_2\ldots w_n\ldots \in \Sigma^\infty$, where $w_n=(i_n,j_n)\in\mathcal{D}_n$ such that $r_n(j_n)=r_n^-$ for $n=1,2,3\ldots$. Let $x=\Pi(\mathbf{w})$ and $S(x)=\Pi(U(\delta,\mathbf{w}))$. Then we have that
\begin{eqnarray*}
  \Gamma_{k,k'}^-(E)&\leq& \Gamma_{k,k'}(S(x))   \\
                    &=& r_{l+1}^- \ldots r_{l'}^- s_{k+1} \ldots s_{k'}\\
                    &=&N_{k,k'}^-(E).
\end{eqnarray*}

Hence for $ l < l'\leq k <k' $, it is true that $\Gamma_{k,k'}^-(E)= N_{k,k'}^-(E).$

\noindent (2) For $ l \leq k < l' \leq k'$, we have that
\begin{eqnarray*}
\operatorname{card}\{i_n: (i_n,j_n)\in \mathcal{D}_n\}&=&r_n(j_n)\geq r_n^-, \qquad \textit{ for }  n=l+1,\ldots,k,
\\
\operatorname{card}\{(i_n,j_n):(i_n,j_n)\in \mathcal{D}_n\}&=&r_n, \qquad \qquad \qquad \textit{ for } n=k+1,\ldots,l',
\\
\operatorname{card}\{j_n:(\widetilde{i_n},j_n)\in \mathcal{D}_n \textit{ for some } \widetilde{i_n}\}&=&s_n ,\qquad \qquad \qquad \textit{ for } n=l'+1,\ldots,k'.
\end{eqnarray*}

Therefore we have that
\begin{eqnarray*}
\Gamma_{k,k'}(S(x))&=&r_{l+1}(j_{l+1}) \ldots r_{k}(j_{k}) r_{k+1} \ldots r_{l'} s_{l'+1} \ldots s_{k'} \\
  &\geq& r_{l+1}^- \ldots r_{k}^- r_{k+1} \ldots r_{l'} s_{l'+1} \ldots s_{k'} \\
  &=&N_{k,k'}^-(E).
\end{eqnarray*}
Since it holds for all $x\in E$, we have that
$$
\Gamma_{k,k'}^-(E)\geq N_{k,k'}^-(E).
$$

On the other hand, we choose $\mathbf{w}=w_1 w_2\ldots w_n\ldots \in \Sigma^\infty$, where $w_n=(i_n,j_n)\in\mathcal{D}_n$ such that $r_n(j_n)=r_n^-$ for $n=1,2,3\ldots$. Let $x=\Pi(\mathbf{w})$ and $S(x)=\Pi(U(\delta,\mathbf{w}))$. Then we have that
$$
\Gamma_{k,k'}(S(x))=r_{l+1}^- \ldots r_{k}^- r_{k+1} \ldots r_{l'} s_{l'+1} \ldots s_{k'}=N_{k,k'}^-(E).
$$
It follows that
\begin{eqnarray*}
  \Gamma_{k,k'}^-(E)&\leq & \Gamma_{k,k'}(S(x))=N_{k,k'}^-(E).
\end{eqnarray*}

Hence for $ l \leq k < l' \leq k' $, it is true that $\Gamma_{k,k'}^-(E)= N_{k,k'}^-(E).$

\noindent (3) For $ l \leq k < k' \leq l'$, we have that
\begin{eqnarray*}
\operatorname{card}\{i_n: (i_n,j_n)\in \mathcal{D}_n\}&=&r_n(j_n)\geq r_n^-, \qquad \textit{ for }  n=l+1,\ldots,k,
\\
\operatorname{card}\{(i_n,j_n):(i_n,j_n)\in \mathcal{D}_n\}&=&r_n, \qquad \qquad\qquad\textit{ for } n=k+1,\ldots,k',
\\
\operatorname{card}\{i_n:(i_n,\widetilde{j_n})\in \mathcal{D}_n \textit{ for some } \widetilde{j_n}\}&=&\widehat{s}_n ,\qquad \qquad\qquad\textit{ for } n=k'+1,\ldots,l'.
\end{eqnarray*}

Therefore we have that
\begin{eqnarray*}
\Gamma_{k,k'}(S(x))&=&r_{l+1}(j_{l+1}) \ldots r_{k}(j_{k}) r_{k+1} \ldots r_{k'} \widehat{s}_{k'+1} \ldots \widehat{s}_{l'} \\
&\geq& r_{l+1}^- \ldots r_{k}^- r_{k+1} \ldots r_{k'} \widehat{s}_{k'+1} \ldots \widehat{s}_{l'} \\
  &=&N_{k,k'}^-(E).
\end{eqnarray*}
Since it holds for all $x\in E$, we have that
$$
\Gamma_{k,k'}^-(E)\geq N_{k,k'}^-(E).
$$

On the other hand, we choose $\mathbf{w}=w_1 w_2\ldots w_n\ldots \in \Sigma^\infty$, where $w_n=(i_n,j_n)\in\mathcal{D}_n$ such that $r_n(j_n)=r_n^-$ for $n=1,2,3\ldots$. Let $x=\Pi(\mathbf{w})$ and $S(x)=\Pi(U(\delta,\mathbf{w}))$. Then we have that
\begin{eqnarray*}
  \Gamma_{k,k'}^-(E)&\leq & \Gamma_{k,k'}(S(x))\\
                      &=&r_{l+1}^- \ldots r_{k}^- r_{k+1} \ldots r_{k'} \widehat{s}_{k'+1} \ldots \widehat{s}_{l'}\\
                      &=& N_{k,k'}^-(E).
\end{eqnarray*}

Hence for $ l \leq k < k' \leq l' $, it is true that $\Gamma_{k,k'}^-(E)= N_{k,k'}^-(E).$ Therefore the conclusion holds.
\end{proof}

\begin{lem}\label{lemL} Given $\beta>0$.
There exists a constant $C$ such that $N_{k,k'}^-(E) > C (\frac{R_k}{R_{k'}})^\beta$ for all $1 \leq k \leq k'$, if and only if there exists a constant $C'$ such that $\inf_{x \in E} N_r(B(x,R) \cap E) > C' (\frac{R}{r})^\beta$ for all $0<r<R<\frac{1}{N^+}$.
\end{lem}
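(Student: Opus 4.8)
The statement is an equivalence between two lower bounds of the same shape, one combinatorial — on the numbers $N_{k,k'}^-(E)$ — and one geometric — on the $r$-covering numbers $N_r(B(x,R)\cap E)$. The plan is to translate between approximate squares and Euclidean balls using three ingredients: Lemma~\ref{ld_cover}, which identifies $N_{k,k'}^-(E)$ with $\Gamma_{k,k'}^-(E)=\min_{S\in\mathcal{S}_k}\operatorname{card}\{S'\in\mathcal{S}_{k'}:S'\subset S\}$; the first lemma of this section, which places inside every $S\in\mathcal{S}_k$ a point $x_S$ with $B(x_S,(N^+)^{-3}R_k)\cap E\subset S$; and two elementary geometric properties of approximate squares that I would record at the outset. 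Since the $x$-side of a level-$k$ square is $(n_1\cdots n_{l(k)})^{-1}\in[R_k/N^+,R_k]$ while its $y$-side is exactly $R_k$, one has $R_k\le\operatorname{diam}(S)\le\sqrt2\,R_k$; and since distinct level-$k'$ squares have disjoint interiors lying in a grid whose mesh in both coordinates is comparable to $R_{k'}$ (this is exactly what the definition~\eqref{def_l} of $l(k')$ guarantees), any set of diameter at most $N^+R_{k'}$ meets at most a constant $C_2=C_2(N^+)$ of them.

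For the direction $(\Leftarrow)$, fix $k\le k'$ and let $S_0\in\mathcal{S}_k$ attain the minimum defining $\Gamma_{k,k'}^-(E)$. By the first lemma there is $x_0\in S_0$ with $B(x_0,(N^+)^{-3}R_k)\cap E\subset S_0$. After shrinking the radius by a fixed factor so that the enclosed $E$-points lie interior to $S_0$, each such point has its level-$k'$ approximate square contained in $S_0$; hence the family $\{S'\in\mathcal{S}_{k'}:S'\subset S_0\}$, of cardinality $N_{k,k'}^-(E)$ and of diameters at most $\sqrt2\,R_{k'}$, covers $B(x_0,R)\cap E$ for some $R\asymp R_k$ with $R<1/N^+$. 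Consequently $N_{\sqrt2 R_{k'}}(B(x_0,R)\cap E)\le N_{k,k'}^-(E)$, and feeding this into the hypothesis with $r=\sqrt2\,R_{k'}$ (admissible once $r<R$) and absorbing the fixed factors into the constant yields $N_{k,k'}^-(E)>C'(R/r)^\beta\ge C(R_k/R_{k'})^\beta$. The range of pairs with $\sqrt2\,R_{k'}\ge R$, where $R_k/R_{k'}$ is bounded, is dispatched trivially from $N_{k,k'}^-(E)\ge1$ after shrinking $C$.

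For the direction $(\Rightarrow)$, fix $x\in E$ and $0<r<R<1/N^+$. Choose $k$ minimal with $\sqrt2\,R_k\le R$ and $k'$ with $R_{k'}\le r<R_{k'-1}$, so that $R_k\asymp R$ and $R_{k'}\asymp r$ with constants depending only on $N^+$. The level-$k$ square $S$ containing $x$ then satisfies $S\subset B(x,R)$, because $\operatorname{diam}(S)\le\sqrt2\,R_k\le R$, and by Lemma~\ref{ld_cover} it contains at least $\Gamma_{k,k'}^-(E)=N_{k,k'}^-(E)$ level-$k'$ squares. Each of these squares meets $E$ inside $B(x,R)$, so in any cover of $B(x,R)\cap E$ by sets of diameter at most $r$ every one of them is met by some covering set, while by the multiplicity bound each covering set meets at most $C_2$ of them. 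Hence $N_r(B(x,R)\cap E)\ge C_2^{-1}N_{k,k'}^-(E)>C_2^{-1}C(R_k/R_{k'})^\beta\ge C'(R/r)^\beta$, and taking the infimum over $x\in E$ gives the claim; the near-diagonal cases $k'\le k$, where $R/r$ is bounded, are again trivial from $N_r\ge1$.

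The symbolic bookkeeping is routine; the genuine content, and the step I expect to be most delicate, is establishing the two geometric properties uniformly in $k$ — in particular the bounded-multiplicity estimate, which hinges on the mesh of the level-$k'$ grid being comparable to $R_{k'}$ in \emph{both} coordinates — together with the careful interior/boundary argument in $(\Leftarrow)$ ensuring that the $E$-points of a slightly smaller ball genuinely have their level-$k'$ squares inside $S_0$, and the matching of the scales $R_k,R_{k'}$ to $R,r$ so that every invocation of a hypothesis stays within the admissible range $0<r<R<1/N^+$.
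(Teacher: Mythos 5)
Your proof is correct and takes essentially the same approach as the paper: both directions rest on exactly the same ingredients the paper uses, namely the ball-inside-square lemma, Lemma~\ref{ld_cover}, the nesting of approximate squares, the bounded-multiplicity estimate for level-$k'$ squares against sets of diameter $r$, the scale matching $R_k\asymp R$, $R_{k'}\asymp r$, and a trivial bound $N\geq 1$ in the near-diagonal range. The only blemish is the ``shrink the radius so the enclosed $E$-points lie interior to $S_0$'' step in your $(\Leftarrow)$ direction, which is neither achievable (shrinking the ball does not move points off the boundary of the enclosing rectangle) nor necessary: symbolic nesting alone shows that every point of $S_0=\Pi(U_0)$ has an address in $U_0$ and hence lies in a level-$k'$ sub-square contained in $S_0$, so these sub-squares cover $B(x_0,(N^+)^{-3}R_k)\cap E$ directly.
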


\begin{proof}
For all reals $r, R$ satisfying $0<r<R<\frac{1}{N^+}$, there exist integers $k,k'$ such that
$$
R_{k'} \leq r < R_{k'-1},\qquad R_{k} \leq R < R_{k-1}.
$$
Immediately, we have that
\begin{equation}\label{Rr}
(N^+)^{-\beta}  \left(\frac{R}{r}\right)^\beta\leq \left(\frac{R_k}{R_{k'}}\right)^\beta \leq (N^+)^\beta \left(\frac{R}{r}\right)^\beta.
\end{equation}

First, assume that $N_{k,k'}^-(E) > C (\frac{R_k}{R_{k'}})^\beta$ for every $1 \leq k \leq k'$.
Arbitrarily choose $x\in E$. The ball $B(x,2R)$ contains at least one approximate square in $\mathcal{S}_k$, and any set with diameter no more than $r$ intersects at most $(N^+ +1)^3$ approximate squares in $\mathcal{S}_{k'}$. Hence for all $0<r<R<\frac{1}{N^+}$, by Lemma~\ref{ld_cover} and \eqref{Rr}, we have that
\begin{eqnarray*}
N_r(B(x,2R) \cap E) &\geq& (N^+ +1)^{-3} N_{k,k'}^-(E) \\
&>& (N^+ +1)^{-3} C \left(\frac{R_k}{R_{k'}}\right)^\beta \\
&\geq& (N^+ +1)^{-3} C (C_1N^+)^{-\beta} \left(\frac{2R}{r}\right)^\beta.
\end{eqnarray*}
By taking $C'=(N^+ +1)^{-3} C (2N^+)^{-\beta}$, we have that
$$
\inf_{x\in E} N_r(B(x,R) \cap E)> C'\left(\frac{R}{r}\right)^\beta
$$
for all $0<r<R<\frac{1}{N^+}$.

Next, assume that $\inf_{x \in E} N_r(B(x,R) \cap E) > C' (\frac{R}{r})^\beta$ holds for any $0<r<R<\frac{1}{N^+}$.
Therefore, by Lemma~\ref{ld_cover} and \eqref{Rr}, for all $1 \leq k \leq k'$ and $S\in \mathcal{S}_k$, if $\frac{R_k}{R_{k'}}>2(N^+)^3$, we have that
\begin{eqnarray*}
\Gamma_{k,k'}(S) &\geq& N_{2R_{k'}}(B(x,(N^+)^{-3}R_k) \cap E)\\
&>& C' 2^{-\beta} (N^+)^{-3\beta} \left(\frac{R_k}{R_{k'}}\right)^\beta.
\end{eqnarray*}
and if $\frac{R_k}{R_{k'}}\leq 2(N^+)^3$, we have that
\begin{eqnarray*}
\Gamma_{k,k'}(S) &\geq& 1 > \frac{2^{-\beta} (N^+)^{-3\beta}}{2} \left(\frac{R_k}{R_{k'}}\right)^\beta.
\end{eqnarray*}
By taking $C=\min\{C' 2^{-\beta} (N^+)^{-3\beta}, \frac{2^{-\beta} (N^+)^{-3\beta}}{2}\}$, we have that
$$
N_{k,k'}^-(E) > C \left(\frac{R_k}{R_{k'}}\right)^\beta
$$
for all $1 \leq k \leq k'$. Then the conclusion holds.
\end{proof}

We write that
$$
\Psi_{k,k'}(\xi)= N_{k,k'}^-(E) (m_{k+1} \ldots m_{k'})^{-\xi}.
$$
Clearly, the function $\Psi_{k,k'}(\xi)$ is decreasing in $\xi$. For all $k<k'$, we write $\xi_{k,k'}$ for the unique solution $\Psi_{k,k'}(\xi)=1$, and it is clear that
\begin{equation}\label{xik}
\xi_{k,k'}=\frac{\log N_{k,k'}^-(E)}{\log m_{k+1} \ldots m_{k'}}.
\end{equation}

For all integers $k''>k'>k>1$,  by Lemma~\ref{ld_cover}, we have that
$$
N_{k,k''}^-(E)\geq N_{k,k'}^-(E) N_{k',k''}^-(E).
$$
Immediately, it follows that
\begin{equation}\label{psikk}
\Psi_{k,k''}(\xi)\geq \Psi_{k,k'}(\xi) \Psi_{k',k''}(\xi).
\end{equation}

\begin{lem}\label{ld_cvgt}
The sequence $\{\inf_k \xi_{k,k+m}\}_{m=1}^\infty$ is convergent.
\end{lem}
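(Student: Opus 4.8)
The plan is to turn the super-multiplicativity recorded just above the lemma, $N_{k,k''}^-(E)\ge N_{k,k'}^-(E)\,N_{k',k''}^-(E)$ for $k<k'<k''$, into a convergence statement, using that the denominators $\log(m_{k+1}\cdots m_{k'})$ add \emph{exactly}. Write $a_m=\inf_k\xi_{k,k+m}$. I first note that $\xi_{k,k'}\ge 0$, since every $N_{k,k'}^-(E)\ge 1$, and that $\xi_{k,k'}$ is uniformly bounded above, since each factor of $N_{k,k'}^-(E)$ is at most $N^+$, the number of factors is comparable to $k'-k$, and $\log(m_{k+1}\cdots m_{k'})\ge (k'-k)\log 2$; hence $\{a_m\}$ is bounded. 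Because $\limsup_m a_m\le\sup_n a_n$ is automatic, it suffices to prove the reverse inequality $\liminf_m a_m\ge\sup_n a_n$, which will pin down the limit and identify it as $\sup_n a_n$.

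The core estimate is the bound $\liminf_m a_m\ge a_n$ for each fixed $n$. To obtain it I would fix $k$ and a large $m$, write $m=qn+s$ with $0\le s<n$, and iterate the super-multiplicativity to split the window $[k,k+m]$ into the $q$ blocks $[k+tn,k+(t+1)n]$, $0\le t<q$, plus a leftover block of length $s$. Taking logarithms, discarding the leftover block via $\log N^-\ge 0$, and bounding each full block below by $a_n$ times its own mass (the definition of $a_n$ as an infimum) gives
\begin{equation*}
\log N_{k,k+m}^-(E)\ \ge\ a_n\sum_{t=0}^{q-1}\log\big(m_{k+tn+1}\cdots m_{k+(t+1)n}\big)\ =\ a_n\,\log\big(m_{k+1}\cdots m_{k+qn}\big),
\end{equation*}
where the masses were summed using exact additivity. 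Dividing by $\log(m_{k+1}\cdots m_{k+m})$ and using $\log 2\le\log m_i\le\log N^+$ to bound the tail mass $\log(m_{k+qn+1}\cdots m_{k+m})\le n\log N^+$ against $\log(m_{k+1}\cdots m_{k+qn})\ge qn\log 2$ yields, since $a_n\ge 0$,
\begin{equation*}
\xi_{k,k+m}\ \ge\ a_n\,\frac{\log(m_{k+1}\cdots m_{k+qn})}{\log(m_{k+1}\cdots m_{k+m})}\ \ge\ a_n\Big(1+\frac{\log N^+}{q\log 2}\Big)^{-1}.
\end{equation*}

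This lower bound is uniform in $k$, so it passes to the infimum: $a_m\ge a_n\big(1+\frac{\log N^+}{q\log 2}\big)^{-1}$ with $q=\lfloor m/n\rfloor$. Letting $m\to\infty$ with $n$ fixed sends $q\to\infty$, so $\liminf_m a_m\ge a_n$; taking the supremum over $n$ closes the sandwich and gives $\lim_m a_m=\sup_n a_n$. The hard part is precisely that the normalising denominator $\log(m_{k+1}\cdots m_{k+m})$ depends on the starting index $k$, so one cannot directly invoke Fekete's subadditive lemma on $\{a_m\}$; the whole difficulty lies in absorbing this $k$-dependence and the remainder block, which is exactly where the uniform distortion bounds $\log 2\le\log m_i\le\log N^+$ (supplied by $m_i\ge 2$ and $N^+<\infty$) do the work.
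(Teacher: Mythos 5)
Your proof is correct and takes essentially the same route as the paper: both arguments iterate the super-multiplicativity $N_{k,k''}^-(E)\ge N_{k,k'}^-(E)\,N_{k',k''}^-(E)$ over blocks of a fixed length, use the uniform bounds $2\le m_i\le N^+$ to absorb the leftover block and the $k$-dependence of the denominators, and deduce $\liminf_m \inf_k \xi_{k,k+m}\ \ge\ \inf_k \xi_{k,k+n}$ for every fixed $n$, which forces convergence (to the supremum). The only differences are bookkeeping: the paper works multiplicatively with $\Psi_{k,k'}(\xi)$, keeps the remainder block and bounds it with a $\min$ of two exponentials before letting the number of blocks tend to infinity, whereas you take logarithms, discard the remainder via $N_{k,k'}^-(E)\ge 1$, and get the explicit correction factor $\bigl(1+\log N^+/(q\log 2)\bigr)^{-1}$ --- a slightly cleaner, quantitative rendering of the identical idea.
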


\begin{proof}
For each integer $m>0$, we write $\zeta_m= \inf_k \xi_{k,k+m}.
$
Since $\zeta_m\leq \xi_{k,k+m}$,  it is clear that for all integers $i>0$ and $k>0$,
$$
\Psi_{k+im,k+(i+1)m}(\zeta_m) \geq 1.
$$
Fix an integer $m >0$. For each $\xi < \zeta_m$,   for all integers $k>0$, $p>0$ and $n>0$ such that $0\leq n\leq m-1$, by~\eqref{psikk}, we obtain that
\begin{eqnarray*}
  \Psi_{k,k+pm+n}(\xi) &\geq& \left(\prod_{i=0}^{p-1} \Psi_{k+im,k+(i+1)m}(\xi)\right)\cdot \Psi_{k+pm,k+pm+n}(\xi)\\
  &=& \left(\prod_{i=0}^{p-1} \Psi_{k+im,k+(i+1)m}(\zeta_m)\big(m_{k+im+1}\ldots m_{k+(i+1)m}\big)^{\zeta_m-\xi}\right)\\
  &&  \qquad\qquad\quad \cdot\Psi_{k+pm,k+pm+n}(\zeta_n)\Big(m_{k+pm+1}\ldots m_{k+pm+n}\Big)^{\zeta_n-\xi}\\
  &\geq& 2^{p(\zeta_m-\xi)} \min\left\{2^{n(\zeta_n-\xi)}, (N^+)^{n(\zeta_n-\xi)}\right\}.
\end{eqnarray*}
Since  $\xi < \zeta_m$, there exists an integer $K_0$ such that for all $p \geq K_0$,
$$
\Psi_{k,k+pm+n}(\xi) \geq 1.
$$
Hence, for all integers $p \geq K_0$, $k \geq 0$ and $n$ such that $0\leq n\leq m-1$, we have that $\xi_{k,k+pm+n} \geq \xi, $
and this implies that $\zeta_{pm+n}\geq \xi.$
Therefore, for all integers $n$ such that $0\leq n\leq m-1$, we have that
$
\liminf_{p \to \infty}\zeta_{pm+n} \geq \xi.
$
Since it holds for all $\xi < \zeta_m$, we obtain that
$$
\liminf_{m \to \infty} \zeta_m \geq \limsup_{m \to \infty} \zeta_m.
$$
Therefore $\{\zeta_m\}$ is convergent, and the conclusion holds.
\end{proof}

To prove the Assouad dimension, we need the following three lemmas. Since Assouad dimension is the dual of lower dimension, the proofs of these lemmas are similar to the lemmas used for lower dimensions, and we skip these proofs.

\begin{lem}\label{lem_cover}
For all integers $k'>k\geq 1$, we have that
$$
\Gamma_{k,k'}^+(E) = N_{k,k'}^+(E).
$$
\end{lem}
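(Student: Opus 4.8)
The plan is to mirror the proof of Lemma~\ref{ld_cover} line by line, exchanging the role of the minima $r_n^-,\widehat r_n^-$ with the maxima $r_n^+,\widehat r_n^+$ and reversing every inequality. First I would fix $k<k'$, set $l=l(k)$ and $l'=l'(k')$ by~\eqref{def_l}, and for an arbitrary approximate square $S(x)\in\mathcal{S}_k$ take its unique representative $U(\mathbf{w})\in\mathcal{U}_k$ with $x=\Pi(\mathbf{w})$, $\mathbf{w}=w_1w_2\ldots$, $w_n=(i_n,j_n)$. As in Lemma~\ref{ld_cover}, computing $\Gamma_{k,k'}(S(x))$ amounts to counting the admissible extensions $\mathbf{w}'$ that agree with $\mathbf{w}$ in the first $l$ horizontal and first $k$ vertical coordinates, and this count factorizes according to the same six orderings of $k,k',l,l'$. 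The only difference from the lower-dimension argument is the direction of the elementary bound used at each factor.

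In case $l<l'\le k<k'$, for instance, the horizontal freedom between levels $l+1$ and $l'$ contributes a factor $r_n(j_n)$ for each such $n$ while the vertical freedom between levels $k+1$ and $k'$ contributes a factor $s_n$, so that
$$
\Gamma_{k,k'}(S(x))=r_{l+1}(j_{l+1})\ldots r_{l'}(j_{l'})\,s_{k+1}\ldots s_{k'}.
$$
Since $r_n(j_n)\le r_n^+$ by definition of $r_n^+$, the right-hand side is at most $r_{l+1}^+\ldots r_{l'}^+\,s_{k+1}\ldots s_{k'}=N_{k,k'}^+(E)$, and taking the maximum over $S\in\mathcal{S}_k$ yields $\Gamma_{k,k'}^+(E)\le N_{k,k'}^+(E)$. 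The same upper bound holds in the remaining five cases of~\eqref{def_Nkk+}, the only bookkeeping change being that in the cases with $k<l$ the relevant factors are the row counts $\widehat r_n(i_n)\le\widehat r_n^+$ rather than the column counts.

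For the reverse inequality I would exhibit a single sequence attaining the bound: choose $\mathbf{w}=w_1w_2\ldots$ with $w_n=(i_n,j_n)\in\mathcal{D}_n$ such that $r_n(j_n)=r_n^+$ (respectively $\widehat r_n(i_n)=\widehat r_n^+$ in the horizontal cases) for every $n$, which is possible because each maximum $r_n^+$ is realized by some admissible column of $\mathcal{D}_n$. Setting $x=\Pi(\mathbf{w})$ and $S(x)=\Pi(U(\delta,\mathbf{w}))$ turns the factorization above into the equality $\Gamma_{k,k'}(S(x))=N_{k,k'}^+(E)$, giving $\Gamma_{k,k'}^+(E)\ge N_{k,k'}^+(E)$ and hence the claim. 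The main obstacle here is purely organizational rather than conceptual: one must carefully track how the product switches from full-cell counts $r_n$ to the projected counts $s_n$ or $\widehat s_n$ as the indices pass through the transition levels $l$ and $k$. Since this combinatorial structure is identical to the one already verified in Lemma~\ref{ld_cover} and every inequality is simply reversed, no genuinely new argument is required, which is why the formal proof may be omitted.
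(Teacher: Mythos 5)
Your proposal is correct and matches the paper's intent exactly: the paper omits this proof precisely because it is the dual of Lemma~\ref{ld_cover}, obtained by replacing $r_n^-,\widehat r_n^-$ with $r_n^+,\widehat r_n^+$, reversing the inequalities, and exhibiting a sequence $\mathbf{w}$ realizing the maxima $r_n(j_n)=r_n^+$ (resp.\ $\widehat r_n(i_n)=\widehat r_n^+$) at every level, which is exactly what you do across the same six case distinctions.
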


\begin{lem}\label{lemA} Given $\beta>0$.
There exists a constant $C$ such that $N_{k,k'}^+(E) < C (\frac{R_k}{R_{k'}})^\beta$ for all $1 \leq k \leq k'$, if and only if there exists a constant $C'$ such that $\sup_{x \in E} N_r(B(x,R) \cap E) < C' (\frac{R}{r})^\beta$ for all $0<r<R<\frac{1}{N^+}$.
\end{lem}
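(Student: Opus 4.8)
The plan is to mirror the proof of Lemma~\ref{lemL} almost verbatim, replacing every lower bound by the corresponding upper bound, every $\min$ by $\max$, and invoking Lemma~\ref{lem_cover} in place of Lemma~\ref{ld_cover}. As in that argument, for reals $0<r<R<1/N^+$ I first fix integers $k,k'$ with $R_{k'}\leq r<R_{k'-1}$ and $R_k\leq R<R_{k-1}$, so that \eqref{Rr} supplies the scale comparison $(N^+)^{-\beta}(R/r)^\beta\leq (R_k/R_{k'})^\beta\leq (N^+)^\beta(R/r)^\beta$, which is what lets one pass freely between the continuous scales $(r,R)$ and the discrete levels $(k',k)$.

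For the forward direction, I would assume $N_{k,k'}^+(E)<C(R_k/R_{k'})^\beta$ for all $k\leq k'$ and fix $x\in E$. Since $R<R_{k-1}$ and each approximate square in $\mathcal{S}_k$ is comparable to a genuine square of side $\approx R_k$, the ball $B(x,R)$ meets at most a bounded number (say $(N^++1)^3$) of squares of $\mathcal{S}_k$; each such square contains at most $\Gamma_{k,k'}^+(E)=N_{k,k'}^+(E)$ squares of $\mathcal{S}_{k'}$ by Lemma~\ref{lem_cover}; and each square of $\mathcal{S}_{k'}$, having diameter at most $\sqrt2\,R_{k'}\leq\sqrt2\,r$, is covered by boundedly many sets of diameter $r$. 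Chaining these three estimates yields $N_r(B(x,R)\cap E)\leq C_2\,N_{k,k'}^+(E)<C_2C(R_k/R_{k'})^\beta\leq C_2C(N^+)^\beta(R/r)^\beta$, and taking $C'=C_2C(N^+)^\beta$ gives the desired supremum bound.

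For the converse, I would assume $\sup_{x\in E}N_r(B(x,R)\cap E)<C'(R/r)^\beta$ on the same range. By Lemma~\ref{lem_cover} I choose $S\in\mathcal{S}_k$ attaining $N_{k,k'}^+(E)=\Gamma_{k,k'}(S)$ and pick $x\in S\cap E$, so that $S\subset B(x,2R_k)$. Every $S'\in\mathcal{S}_{k'}$ with $S'\subset S$ meets $E$, and selecting one point of $E$ in each such $S'$ produces a point set that any set of diameter $r\approx R_{k'}$ can meet only boundedly often; hence $\Gamma_{k,k'}(S)\leq C_3\,N_r(B(x,2R_k)\cap E)$. When $R_k/R_{k'}$ is large enough that $R=2R_k$ and $r$ lie in the admissible range, the hypothesis gives $\Gamma_{k,k'}(S)<C_3C'(2R_k/r)^\beta\leq C_3C'2^\beta(N^+)^\beta(R_k/R_{k'})^\beta$; when instead $R_k/R_{k'}\leq 2(N^+)^3$, the number of intervening levels $k'-k$ is bounded, so $N_{k,k'}^+(E)$ is bounded by an absolute constant, which is trivially at most $C(R_k/R_{k'})^\beta$ since $R_k/R_{k'}=m_{k+1}\cdots m_{k'}\geq1$. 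Choosing $C$ as the larger of the two resulting constants closes the argument.

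The constant-chasing and the two boundary cases (small ratio, and keeping $R<1/N^+$) are routine and are handled exactly as in Lemma~\ref{lemL}. The one point demanding genuine care, common to both directions, is the geometric counting: that a ball of radius comparable to $R_k$ meets only $O(1)$ approximate squares of $\mathcal{S}_k$, and dually that a set of diameter comparable to $R_{k'}$ meets only $O(1)$ approximate squares of $\mathcal{S}_{k'}$. This rests on an approximate square being, up to a factor controlled by $N^+$, an honest axis-parallel square of side $\approx R_{k'}$, which follows from the matching relation \eqref{def_l} between $k'$ and $l'=l(k')$ together with the standing assumption $N^+<\infty$. Once this comparability is recorded, both inclusions and both counting estimates follow mechanically, and the duality with Lemma~\ref{lemL} is complete.
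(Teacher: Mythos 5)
Your proposal takes exactly the route the paper intends: the paper gives no proof of Lemma~\ref{lemA} at all, saying only that it is dual to Lemma~\ref{lemL}, and your argument is precisely that dualization --- Lemma~\ref{lem_cover} in place of Lemma~\ref{ld_cover}, the scale comparison \eqref{Rr}, bounded-multiplicity counting of approximate squares, and a trivial small-ratio case. The forward direction is sound as written (the hypothesis on $N_{k,k'}^+(E)$ carries no range restriction on $k$, so no boundary issue arises there), and your counting estimates are correct up to the harmless choice of constants.

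There is, however, one case in your converse that falls through your two-case split, and it is \emph{not} ``handled exactly as in Lemma~\ref{lemL}'' as you claim. Your hypothesis applies only when $R=2R_k<1/N^+$, which is a condition on $k$ alone, whereas your case division is governed by the ratio $R_k/R_{k'}$: the pairs with $R_k/R_{k'}>2(N^+)^3$ but $2R_k\geq 1/N^+$ (small $k$, large $k'$) are covered by neither case. In Lemma~\ref{lemL} this issue never appears, because the ball used there has radius $(N^+)^{-3}R_k\leq (N^+)^{-3}/2<1/N^+$ automatically; in the Assouad dual the ball must \emph{contain} the approximate square $S$, so its radius is comparable to $R_k$ and cannot be shrunk. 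The patch is easy but genuinely different from Lemma~\ref{lemL}: since $R_k\leq 2^{-k}$, only the finitely many levels $k<k_0:=\lceil \log_2 (2N^+)\rceil+1$ are problematic, and for those one can use the submultiplicativity $N_{k,k'}^+(E)\leq N_{k,k_0}^+(E)\,N_{k_0,k'}^+(E)$ (the dual of the inequality behind \eqref{psikk}), the crude bound of $N_{k,k_0}^+(E)$ by a constant depending only on $N^+$ (it is a product of boundedly many factors, each at most $(N^+)^2$), and the monotonicity $R_{k_0}\leq R_k$, to reduce to the case $k\geq k_0$ at the cost of enlarging $C$. With that patch inserted, your proof is complete.
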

We write that
$$
\Delta_{k,k'}(\beta)= N_{k,k'}^+(E) (m_{k+1} \ldots m_{k'})^{-\beta}
$$
and we write $\beta_{k,k'}$ for the unique solution $\Delta_{k,k'}(\beta)=1$
\begin{lem}\label{lem_cvgt}

The sequence $\{\sup_k \beta_{k,k+m}\}_{m=1}^\infty$ is convergent.
\end{lem}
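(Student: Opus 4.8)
The plan is to mirror the proof of Lemma~\ref{ld_cvgt}, reversing every inequality, since the Assouad quantity $N_{k,k'}^+(E)$ is submultiplicative where $N_{k,k'}^-(E)$ was supermultiplicative. Writing $\eta_m=\sup_k \beta_{k,k+m}$, the goal is to show $\limsup_{m\to\infty}\eta_m\le\liminf_{m\to\infty}\eta_m$, which forces convergence. The first ingredient I would record is the submultiplicative estimate. Fixing $S\in\mathcal{S}_k$ and counting through an intermediate level $k'$, every approximate square in $\mathcal{S}_{k''}$ contained in $S$ lies in a unique $S'\in\mathcal{S}_{k'}$ with $S'\subset S$, so that $\Gamma_{k,k''}(S)=\sum_{S'\subset S}\Gamma_{k',k''}(S')\le \Gamma_{k,k'}(S)\,N_{k',k''}^+(E)$. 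Taking the maximum over $S$ and invoking Lemma~\ref{lem_cover} yields $N_{k,k''}^+(E)\le N_{k,k'}^+(E)\,N_{k',k''}^+(E)$, and hence
\begin{equation*}
\Delta_{k,k''}(\beta)\le \Delta_{k,k'}(\beta)\,\Delta_{k',k''}(\beta).
\end{equation*}

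Since $\beta_{k,k+m}\le\eta_m$ for every $k$ and $\Delta_{k,k'}(\cdot)$ is decreasing, monotonicity gives $\Delta_{k+im,k+(i+1)m}(\eta_m)\le 1$ for all $i,k$. Now fix $m$ and pick any $\beta>\eta_m$. For integers $p>0$ and $0\le n\le m-1$, the submultiplicative estimate applied $p$ times gives
\begin{equation*}
\Delta_{k,k+pm+n}(\beta)\le\Big(\prod_{i=0}^{p-1}\Delta_{k+im,k+(i+1)m}(\beta)\Big)\,\Delta_{k+pm,k+pm+n}(\beta).
\end{equation*}
I would then rewrite each block factor as $\Delta_{k+im,k+(i+1)m}(\beta)=\Delta_{k+im,k+(i+1)m}(\eta_m)\,(m_{k+im+1}\cdots m_{k+(i+1)m})^{\eta_m-\beta}$; since $\Delta(\eta_m)\le1$, the product of the $m_j$ exceeds $2^m$, and the exponent $\eta_m-\beta$ is negative, each block factor is at most $2^{m(\eta_m-\beta)}$. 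The remainder $\Delta_{k+pm,k+pm+n}(\beta)$ is bounded uniformly in $k,p$ by a constant $C_m$ depending only on $m$ (and $N^+$), because $N_{k+pm,k+pm+n}^+(E)$ has boundedly many factors each at most $(N^+)^2$ by \eqref{def_Nkk+}, while $(m_{k+pm+1}\cdots m_{k+pm+n})^{-\beta}\le1$ for $\beta\ge0$. Consequently $\Delta_{k,k+pm+n}(\beta)\le C_m\,2^{pm(\eta_m-\beta)}$, which tends to $0$ as $p\to\infty$, uniformly in $k$ and in $n\in\{0,\dots,m-1\}$.

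Hence there is $K_0$ so that $\Delta_{k,k+pm+n}(\beta)\le1$ for every $p\ge K_0$, every $k$, and every such $n$; by monotonicity this forces $\beta_{k,k+pm+n}\le\beta$, and taking the supremum over $k$ gives $\eta_{pm+n}\le\beta$. As every sufficiently large integer is of the form $pm+n$ with $p\ge K_0$ and $0\le n\le m-1$, we obtain $\limsup_{M\to\infty}\eta_M\le\beta$. Letting $\beta\downarrow\eta_m$ gives $\limsup_M\eta_M\le\eta_m$ for every $m$, so $\limsup_M\eta_M\le\inf_m\eta_m\le\liminf_M\eta_M$, and convergence follows.

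The main obstacle I anticipate is bookkeeping rather than conceptual: one must verify that $N_{k,k'}^+(E)$ is genuinely submultiplicative across an arbitrary intermediate level $k'$, which is less transparent than the one-line counting suggests because of the six-case structure of \eqref{def_Nkk+} (the $x$- and $y$-directions advance at different rates, so the level $k'$ need not align with a break between cases). One must also ensure the uniform bound $C_m$ on the remainder term is genuinely independent of $p$ and $k$, so that the geometric decay $2^{pm(\eta_m-\beta)}$ alone drives the estimate below $1$. Beyond that, the only care needed is to keep every inequality consistent with the $\sup$ (rather than the $\inf$) that defines $\eta_m$.
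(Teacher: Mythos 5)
Your proposal is correct and is essentially the argument the paper intends: the paper explicitly omits this proof as the dual of Lemma~\ref{ld_cvgt}, and your proof mirrors that one step for step (submultiplicativity of $N_{k,k'}^+(E)$ via nested approximate squares and Lemma~\ref{lem_cover}, block decomposition of $\Delta_{k,k+pm+n}$, geometric decay forcing $\beta_{k,k+pm+n}\le\beta$, hence $\limsup\le\liminf$). The only cosmetic difference is that you bound the remainder factor by a uniform constant $C_m$ rather than by the term involving $\eta_n$ as in the paper's treatment of $\Psi_{k+pm,k+pm+n}$, which changes nothing substantive.
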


Now, we are ready to prove the lower and Assouad dimension of self-affine dimensions

\begin{proof}[Proof of Theorem~\ref{thmL}]
By \eqref{xik} and Lemma~\ref{ld_cvgt}, we write that
\begin{equation}\label{xi*}
\xi_*=\lim_{m \to \infty} \inf_{k}\xi_{k,k+m}=\lim_{m \to \infty} \inf_{k}\frac{\log N_{k,k+m}^-(E)}{\log m_{k+1} \ldots m_{k+m}}.
\end{equation}

To prove that t $\xi_*$ is  the upper bound for the lower dimension of $E$,  we choose a subsequence $\{(k_n,k_n^\prime)\}_{n=1}^\infty$ such that $\lim_{n\to\infty}k_n'-k_n=\infty$ and  $\lim_{n \to \infty} \xi_{k_n,k_n^\prime} = \xi_*.$
For all $\xi > \xi_*$, there exists an integer $K'>0$ such that, for all $n > K'$, $$
\xi > \xi_{k_n,k_n^\prime}.
$$
Combining with ~\eqref{xik},  we obtain that
$$
N_{k_n,k_n^\prime}^-(E)=\bigg(\frac{R_{k_n}}{R_{k_n^\prime}} \bigg)^{\xi_{k_n,k_n^\prime}}\leq \bigg(\frac{R_{k_n}}{R_{k_n^\prime}}\bigg)^\xi.
$$
For all $\varepsilon>0$, by the definition of lower dimension, there exists $C_\varepsilon$ such that
$$
\inf_{x\in E}N_{r}(B(x,R)\cap E) \geq C_\varepsilon \bigg(\frac{R}{r}\bigg)^{\lwd E - \varepsilon}.
$$
By Lemma~\ref{lemL}, this is equivalent to
$$
N_{k_n,k_n^\prime}^-(E) \geq C_\varepsilon \bigg(\frac{R_{k_n}}{R_{k_n^\prime}}\bigg)^{\lwd E-\varepsilon},
$$
for all $n>0$. Immediately, we obtain that
$$
\bigg(\frac{R_{k_n}}{R_{k_n^\prime}}\bigg)^\xi \geq C_\varepsilon \bigg(\frac{R_{k_n}}{R_{k_n^\prime}}\bigg)^{\lwd E-\varepsilon},
$$
and it implies that
$$
\xi \geq \lwd E-\varepsilon+\frac{\log C_\varepsilon}{\log R_{k_n}-\log R_{k_n^\prime}}.
$$
by taking  $n$ tend to $ \infty$, we have that $\xi \geq  \lwd E-\varepsilon$. Since $\varepsilon $ is arbitrarily chosen,   we obtain that
$$
\lwd E \leq \xi,
$$
for all $\xi > \xi_*$. Thus the inequality $\lwd E \leq \xi_*$ holds.

Next, we prove that $\xi_*$ is the lower bound. The  conclusion holds for $\xi_*=0$, and we only consider that $\xi_*>0$.

Arbitrarily choose $0< \xi < \xi_* $, by~\eqref{xi*}, there exists an integer $K''>0$ such that, for all $m > K''$, we have that $\xi < \xi_{k,k+m}$,  for all integers $k>0$ . Combining with ~\eqref{xik}, it follows that for all $k'-k > K''$,
$$
N_{k,k'}^-(E) = (m_{k+1}\ldots m_{k'})^{\xi_{k,k'}} = \bigg(\frac{R_k}{R_{k'}}\bigg)^{\xi_{k,k'}} >  \bigg(\frac{R_k}{R_{k'}}\bigg)^\xi.
$$
For all $k'-k \leq K''$, since $\xi>0$, we obtain that
\begin{eqnarray*}\label{}
N_{k,k'}^-(E) &\geq &  \bigg(\frac{R_k}{R_{k'}}\bigg)^{\xi-\xi}
\geq \bigg(\frac{R_k}{R_{k'}}\bigg)^\xi (N^+)^{-K'' \xi}.
\end{eqnarray*}
Let $C_\xi=(N^+)^{-K'' \xi}$, we have
$$
N_{k,k'}^-(E) \geq C_\xi \bigg(\frac{R_k}{R_{k'}}\bigg)^\xi
$$
for all $\xi > \xi_* $. By Lemma~\ref{lemL}, $\lwd E \geq \xi_*$. Hence the lower dimension formula holds.

The proof for Assouad dimension is similar to the lower dimensions, where Lemma~\ref{ld_cover}, Lemma~\ref{lemL} and Lemma~\ref{ld_cvgt} are replace by Lemma~\ref{lem_cover}, Lemma~\ref{lemA} and Lemma~\ref{lem_cvgt} and we omit it.
\end{proof}

\begin{thebibliography}{1}
\bibitem{BHR19} B. B{\'a}r{\'a}ny and M. Hochman and A. Rapaport.
\newblock Hausdorff dimension of planar self-affine sets and measures.
\newblock {\em Invent. Math.}, 216 :601--659, 2019.


\bibitem{Baran07} K.~Bara{\'n}ski.
\newblock Hausdorff dimension of the limit sets of some planar geometric  constructions.
\newblock {\em Adv. Math.}, 210: 215--245, 2007.



\bibitem{Bedfo84} T.~Bedford.
\newblock {\em Crinkly curves, {M}arkov partitions and box dimensions in   self-similar sets}.
\newblock PhD thesis, University of Warwick, 1984.

\bibitem{BYQJF}
Y.Benoist, and J.-F. Quint.
\newblock Random Walks on Reductive Groups.
\newblock Ergebnisse der Mathematik und ihrer Grenzgebiete. 3. Folge. A Series of Modern Surveys in Mathematics, vol 62. Springer, Cham, 2016.

\bibitem{DasSim17} T. Das, and D. Simmons.
\newblock{The Hausdorff and dynamical dimensions of
self-affine sponges: a dimension gap result}.
\newblock {\em Invent. Math.}, 210, 85–134, 2017.


\bibitem{Falco97} K.~Falconer.
\newblock {\em Techniques in fractal geometry}.
\newblock John Wiley \& Sons Ltd., Chichester, 1997.

\bibitem{Bk_KJF2}
K.~J. Falconer.
\newblock {\em Fractal Geometry-Mathematical Foundations and Applications}.
\newblock John Wiley \& Sons Inc., Hoboken, NJ, second edition, 2003.

\bibitem{Falco88} K.~J. Falconer.
\newblock The {H}ausdorff dimension of self-affine fractals.
\newblock {\em Math. Proc. Cambridge Philos. Soc.}, 103(2):339--350, 1988.

\bibitem{Falco92} K.~J. Falconer.
\newblock The dimension of self-affine fractals. {II}.
\newblock {\em Math. Proc. Cambridge Philos. Soc.}, 111(1):169--179, 1992.

\bibitem{FanLR02}A.-H. Fan, K.S. Lau and H.Rao.
\newblock Relationships between different dimensions of a measure.
\newblock {\em Monatsh. Math.} 135: 191–201, 2002.

\bibitem{FDJWY05}
D.-J. Feng and Y. Wang.
\newblock { A class of self-affine sets and self-affine measures}.
\newblock{\em J. Fourier Anal. Appl.,} 11, 107--124 (2005)

\bibitem{Fraser14} J.~M. Fraser.
\newblock { Assouad type dimensions and homogeneity of fractals}
\newblock {\em Trans. Amer. Math. Soc.} 366: 6687--6733, 2014.

\bibitem{Fraser20} J. Fraser.
  \newblock{\em Assouad dimension and fractal geometry},
  \newblock{Cambridge University Press}, 2020.

\bibitem{Fraser21} J.~M. Fraser.
\newblock Fractal geometry of Bedford-McMullen carpets.
\newblock {\em Thermodynamic formalism}, 495--516, 2021.

\bibitem{FMS18} J. M. Fraser, J.J. Miao and S. Troscheit.
 \newblock The Assouad dimension of randomly generated fractals. \newblock{\em Ergodic Theory Dyn. Syst.}, 38, no. 3, 982--1011
2018.

\bibitem{GM22} Y. Gu and J. J. Miao.
\newblock{ Dimensions of a class of self-affine Moran sets}.
\newblock{\em J. Math. Anal. Appl.}, 513, 2022.

\bibitem{GYLW10}
Y. Gui and W.  Li.
\newblock { Multiscale self-affine Sierpinski carpets}.
\newblock {\em Nonlinearity}, 23, 495--512, 2010.


\bibitem{HM} C. Hou and J. J. Miao.
\newblock{Doubling properties of self-affine , measures supported on Gatzouras-Lalley fractals}.
\newblock{\em Fractals},  27, 1-11, 2019.


\bibitem{Hutch81} J.~E. Hutchinson.
\newblock Fractals and self-similarity.
\newblock {\em Indiana Univ. Math. J.}, 30, 713--747, 1981.

\bibitem{KenPer96} R. Kenyon, Y. Peres.
\newblock Measures of full dimension on affine-invariant sets,
\newblock {\em Ergodic Theory Dyn. Syst.} 16, 307–323, 1996.

\bibitem{LalGa92} S.~P. Lalley and D.~Gatzouras.
\newblock Hausdorff and box dimensions of certain self-affine fractals.
\newblock {\em Indiana Univ. Math. J.}, 41, 533--568, 1992.

\bibitem{LiLiMi13} B.~Li, W.~Li and J.~Miao.
\newblock Lipschitz equivalence of McMullen sets.
\newblock {\em Fractals }, 21(3-4):1--11, 2014.

\bibitem{Mackay11}J. M. Mackay.
\newblock{Assouad dimension of self-affine carpets},
\newblock{\emph{Conform. Geom. Dyn.}} 15, 177--187, (2011).

\bibitem{McMul84} C.~McMullen.
\newblock The {H}ausdorff dimension of general {S}ierpi\'nski carpets.
\newblock {\em Nagoya Math. J.}, 96,1--9, 1984.

\bibitem{MXX} J. Miao, L.-F. Xi and Y. Xiong.
\newblock Gap sequences of McMullen sets.
\newblock {\em Proc. Amer. Math. Soc.}, 145(4):1629--1637, 2017.


\bibitem{Moran} P. A. Moran.
\newblock{Additive functions of intervals and Hausdorff
measure}.
\newblock{\em Proc. Camb. Phil. Soc.}, 42, 1946, 15--23.



\bibitem{PerSo00} Y.~Peres and B.~Solomyak.
\newblock Problems on self-similar sets and self-affine sets: an update.
\newblock In {\em Fractal geometry and stochastics, {II}
  ({G}reifswald/{K}oserow, 1998)}, volume~46 of {\em Progr. Probab.}, pages
  95--106. Birkh\"auser, Basel, 2000.


\bibitem{Solom98} B.~Solomyak.
\newblock Measure and dimension for some fractal families.
\newblock {\em Math. Proc. Cambridge Philos. Soc.}, 124(3):531--546, 1998.

\bibitem{Wen01} Z. Wen.
\newblock { Moran sets and Moran classes}.
\newblock {\em Chinese Sci. Bull.} 46: 1849--1856, 2001.

\bibitem{Young82} L. S. Young.
\newblock Dimension, entropy and Lyapunov exponents.
\newblock {\em   Ergodic Theory Dyn. Syst.,}, 2:109--123, 1982.


\end{thebibliography}
\end{document}